\newtheorem{theorem}{Theorem}
\newtheorem{lemma}{Lemma}
\newtheorem{corollary}{Corollary}
\newtheorem{assumption}{Assumption}
\newtheorem{definition}{Definition}
\crefname{assumption}{assumption}{assumptions}
\DeclareMathOperator{\dom}{\mathrm{dom}}
\newcommand{\prox}{\mathrm{prox}}
\newcommand{\R}{\mathbb{R}}
\def\<#1,#2>{\langle #1,#2\rangle}
\newcommand{\Dotprod}[1]{\left\langle#1\right\rangle}
\newcommand{\norm}[1]{\|#1\|}
\newcommand{\sqn}[1]{\norm{#1}^2}
\newcommand{\vect}[1]{\begin{bmatrix*}[c]#1\end{bmatrix*}}
\newcommand{\cU}{\mathcal{U}}
\newcommand{\cL}{\mathcal{L}}
\newcommand{\cO}{\mathcal{O}}
\newcommand{\mA}{\mathbf{A}}
\DeclareMathOperator*{\argmin}{arg\,min}
\newcommand{\sX}{\R^{d_x}}
\newcommand{\sY}{\R^{d_y}}
\newcommand{\F}{\hat{F}}
\newcommand{\J}{\mathrm{J}}
\title{The First Optimal Algorithm for Smooth and Strongly-Convex-Strongly-Concave Minimax Optimization}
\author{%
	Dmitry~Kovalev\\
	KAUST\thanks{King Abdullah University of Science and Technology, Thuwal, Saudi Arabia}\\
 	\texttt{dakovalev1@gmail.com} 
 	\And
 	Alexander~Gasnikov\\
 	IITP~RAS\thanks{Institute for Information Transmission Problems RAS, Moscow, Russia}\\
 	\texttt{gasnikov@yandex.ru}
}
\begin{document}

\maketitle

\begin{abstract}
	In this paper, we revisit the smooth and strongly-convex-strongly-concave minimax optimization problem. \citet{zhang2021on} and \citet{ibrahim2020linear} established the lower bound $\Omega\left(\sqrt{\kappa_x\kappa_y} \log \frac{1}{\epsilon}\right)$ on the number of gradient evaluations required to find an $\epsilon$-accurate solution, where $\kappa_x$ and $\kappa_y$ are condition numbers for the strong convexity and strong concavity assumptions. However, the existing state-of-the-art methods do not match this lower bound: algorithms of \citet{lin2020near} and  \citet{wang2020improved} have gradient evaluation complexity $\cO\left( \sqrt{\kappa_x\kappa_y}\log^3\frac{1}{\epsilon}\right)$ and $\cO\left( \sqrt{\kappa_x\kappa_y}\log^3 (\kappa_x\kappa_y)\log\frac{1}{\epsilon}\right)$, respectively. We fix this fundamental issue by providing the first algorithm with $\cO \left(\sqrt{\kappa_x\kappa_y}\log\frac{1}{\epsilon}\right)$ gradient evaluation complexity. We design our algorithm in three steps: (i) we reformulate the original problem as a minimization problem via {\em the pointwise conjugate function}; (ii) we apply a specific variant of the proximal point algorithm to the reformulated problem; (iii) we compute the proximal operator inexactly using the optimal algorithm for operator norm reduction in monotone inclusions.
\end{abstract}

\section{Introduction}
In this paper, we revisit the smooth and strongly-convex-strongly-concave minimax optimization problem of the form
\begin{equation}\label{eq:main}
	\min_{x \in \sX}\max_{y \in \sY} r(x) + F(x,y) - g(y),
\end{equation}
where $F(x,y) \colon \sX \times \sY \rightarrow \R$ is a continuously differentiable function, $r(x) \colon \sX \rightarrow \R \cup \{+\infty\}$ and $g(y) \colon \sY \rightarrow \R \cup \{+\infty\}$ are proper lower semi-continuous convex functions.
Problem~\eqref{eq:main} has been actively studied in economics, game theory, statistics and computer science
\citep{bacsar1998dynamic,roughgarden2010algorithmic,von1947theory,facchinei2003finite,berger2013statistical}.
Recently, many applications of this problem appeared in machine learning, including adversarial training \citep{madry2017towards,sinha2017certifiable}, prediction and regression problems \citep{taskar2005structured,xu2009robustness}, reinforcement learning \citep{du2017stochastic,dai2018sbeed} and generative adversarial networks \cite{arjovsky2017wasserstein,goodfellow2014generative}.

In our paper, we focus on the case when function $f(x,y)$ is strongly convex in $x$ and strongly concave in $y$. There are several reasons to consider this function class. First, this setting is fundamental and studied by most existing works on minimax optimization.\footnote{Most existing works on minimax optimization study the convex-concave case. However, this setting can be easily reduced to the strongly-convex-strongly-concave case via the regularization technique \citep{lin2020near}.} Second, efficient algorithms initially developed for convex optimization often show state-of-the-art performance in non-convex applications  \citep{kingma2014adam,reddi2019convergence,duchi2011adaptive}. 
Finally, we will further see that this fundamental setting is utterly understudied and lacks answers to even the most basic questions such as ``What is the best possible algorithm for solving a problem in this setting?''\footnote{In contrast to smooth convex-concave minimax optimization, the answer to this question for smooth convex minimization was given by \citet{nesterov1983method} several decades ago.}

\subsection{Related Work}
Until recently, the best-known gradient evaluation complexity of solving problem~\eqref{eq:main} was $\cO \left(\max\left\{\kappa_x,\kappa_y\right\}\log\frac{1}{\epsilon}\right)$ \citep{tseng2000modified,nesterov2006solving,gidel2018variational}, where $\kappa_x$ and $\kappa_y$ denote the condition numbers of functions $f(\cdot,y)$ and $f(x,\cdot)$, respectively. The first attempt to provide an algorithm with an ``accelerated'' convergence rate was the work of \citet{alkousa2019accelerated}. They provided an algorithm with $\cO\left(\min\left\{\kappa_x\sqrt{\kappa_y},\kappa_y\sqrt{\kappa_x}\right\} \log^2\frac{1}{\epsilon}\right)$ gradient evaluation complexity. This result was subsequently improved up to $\cO\left( \sqrt{\kappa_x\kappa_y}\log^3\frac{1}{\epsilon}\right)$ by \citet{lin2020near} and $\cO\left( \sqrt{\kappa_x\kappa_y}\log^3 (\kappa_x\kappa_y)\log\frac{1}{\epsilon}\right)$ by \citet{wang2020improved}. However, these results do not match the lower complexity bound $\Omega \left(\sqrt{\kappa_x\kappa_y}\log\frac{1}{\epsilon}\right)$ established by \citet{zhang2021on,ibrahim2020linear}. Hence, we have the following fundamental open problem:
\begin{center}
	\em
	Can we design an algorithm that achieves the lower gradient evaluation complexity bound in smooth and strongly-convex-strongly-concave minimax optimization?
\end{center}

It is worth mentioning that this open question was answered positively in the work of \citet{kovalev2021accelerated} in the case of minimax problems with bilinear coupling, i.e., when $F(x,y)= p(x) + x^\top \mA y - q(y)$, where $p(x)$ and $q(y)$ are smooth and strongly convex functions, and $\mA$ is a $d_x\times d_y$ matrix. However, the algorithm provided in this work does not apply to the general minimax problem~\eqref{eq:main}.

\begin{table}
	\caption{Comparison of the state-of-the-art algorithms for solving smooth and strongly-convex-strongly-concave minimax problems in the number of gradient evaluations required to find an $\epsilon$-accurate solution (\Cref{def:accuracy}).}
	\label{tab:main}
	\centering
	\begin{NiceTabular}{|c|c|}[code-before = \rowcolor{gray!25}{8}]
		\toprule
		\bf Reference & \bf Gradient Complexity\\
		\midrule
		\makecell{\citet{tseng2000modified}} & \Block{3-1}{$\cO \left(\max\left\{\kappa_x,\kappa_y\right\}\log\frac{1}{\epsilon}\right)$}\\
		\cmidrule{1-1}
		\makecell{ \citet{nesterov2006solving}}&\\
		\cmidrule{1-1}
		\makecell{\citet{gidel2018variational}} & \\
		\midrule
		\citet{alkousa2019accelerated} & $\cO\left(\min\left\{\kappa_x\sqrt{\kappa_y},\kappa_y\sqrt{\kappa_x}\right\} \log^2\frac{1}{\epsilon}\right)$ \\
		\midrule
		\makecell{\citet{lin2020near}}& $\cO\left( \sqrt{\kappa_x\kappa_y}\log^3\frac{1}{\epsilon}\right)$\\
		\midrule
		\makecell{\citet{wang2020improved}}&$\cO\left( \sqrt{\kappa_x\kappa_y}\log^3 (\kappa_x\kappa_y)\log\frac{1}{\epsilon}\right)$\\
		\midrule
%		\RowStyle[cell-space-limits=0.0em]{}
		\bf \Cref{alg:foam} (This paper)&$\cO \left(\sqrt{\kappa_x\kappa_y}\log\frac{1}{\epsilon}\right)$\\
		\midrule
		\makecell{Lower Bound \citep{zhang2021on,ibrahim2020linear}}&$\Omega \left(\sqrt{\kappa_x\kappa_y}\log\frac{1}{\epsilon}\right)$\\
		\bottomrule
	\end{NiceTabular}
\end{table}

\subsection{Main Contributions}

We develop the first optimal algorithm for solving problem~\eqref{eq:main} in the smooth and strongly-convex-strongly-concave regime, which is the main contribution of this work. We split the algorithm development in three steps:
\begin{enumerate}[topsep=0em]
	\item[\bf (i)]  In \Cref{sec:min}, we reformulate problem~\eqref{eq:main} as a particular minimization problem.
	\item[\bf (ii)] In \Cref{sec:pp}, we develop a specific variant of the accelerated proximal point algorithm (\Cref{alg:pp}) which will be used as a baseline for the optimal algorithm construction.
	\item[\bf (ii)] In \Cref{sec:inclusion}, we develop an optimal algorithm for operator norm reduction in monotone inclusion problems, which will be used for the proximal operator computation in \Cref{alg:pp}.
\end{enumerate}
In the final \Cref{sec:foam}, we summarize these three steps by describing the optimal algorithm construction and showing that the complexity of the proposed algorithm matches the lower bound.

As mentioned before, in \Cref{sec:inclusion}, we develop an optimal algorithm for operator norm reduction in composite monotone inclusion problems of the form~\eqref{eq:inclusion}, which is the second main contribution of this work. To the best of our knowledge, there is only one optimal algorithm of \citet{yoon2021accelerated}, which works for Lipschitz-continuous operators only, i.e., when $B(u) \equiv 0$ in problem~\eqref{eq:inclusion}. In contrast to this, our algorithm works in the composite case with general maximally monotone operator $B(u)$.

\section{Preliminaries}

The following assumptions formalize the smoothness, strong convexity, and strong concavity properties of function $f(x,y)$.

\begin{assumption}\label{ass:convexity}
	Function $F(x,y)$ is $\mu_x$-strongly convex in $x$, where $\mu_x > 0$. That is, the following inequality holds for all $x_1,x_2 \in \sX, y \in \sY$:
	\begin{equation}
		F(x_2,y) \geq F(x_1,y) + \<\nabla_x F(x_1,y), x_2 - x_1> + (\mu_x/2)\sqn{x_2 - x_1}.
	\end{equation}
\end{assumption}

\begin{assumption}\label{ass:concavity}
	Function $F(x,y)$ is $\mu_y$-strongly concave in $y$, where $\mu_y > 0$. That is, the following inequality holds for all $x \in \sX, y_1,y_2 \in \sY$:
	\begin{equation}
		F(x,y_2) \leq F(x,y_1) + \<\nabla_y F(x,y_1), y_2 - y_1> - (\mu_y/2)\sqn{y_2 - y_1}.
	\end{equation}
\end{assumption}

\begin{assumption}\label{ass:smoothness}
	Function $F(x,y)$ is $L$-smooth. That is, the following inequality holds for all $x_1,x_2 \in \sX, y_1,y_2 \in \sY$:
	\begin{equation}
		\sqn{\nabla F(x_1,y_1) - \nabla F(x_2,y_2)} \leq L^2 \left(\sqn{x_1 - x_2} + \sqn{y_1 - y_2}\right).
	\end{equation}
\end{assumption}

Under these assumptions, by $\kappa_x=\frac{L}{\mu_x}$ and $\kappa_y=\frac{L}{\mu_x}$, we denote the condition numbers of functions $F(\cdot,y)$ and $F(x, \cdot)$, respectively. The following assumption formalizes the properties of regularizers $r(x)$ and $g(y)$.

\begin{assumption}
	Functions $r(x)$ and $g(y)$ are convex, lower semi-continuous and proper, i.e., there exist $\bar{x} \in \sX, \bar{y} \in \sY$ such that $r(\bar{x}), g(\bar{y}) < +\infty$.
\end{assumption}

By $(x^*,y^*) \in \sX\times \sY$, we denote the solution of problem~\eqref{eq:main}, which is characterized via the first-order optimality conditions
\begin{equation}\label{eq:opt}
	\begin{cases}
			\begin{split}
				-\nabla_x F(x^*,y^*) &\in \partial r(x^*),\\
				\nabla_y F(x^*,y^*) &\in \partial g(y^*).
				\end{split}
		\end{cases}
\end{equation}
Note that there exists a unique solution to the problem due to the strong convexity and strong concavity assumptions (\Cref{ass:convexity,ass:concavity}). Hence, for any point $(x,y) \in  R^{d_x}\times \sY$, we can use squared distance to the solution $\sqn{ x - x^*} + \sqn{y - y^*}$ as an optimality criterion. We formalize it through the following definition.
\begin{definition}\label{def:accuracy}
	We call a pair of vectors $(x,y) \in \sX\times \sY$  an $\epsilon$-accurate solution of problem~\eqref{eq:main} for a given accuracy $\epsilon > 0$ if it satisfies
	\begin{equation}
		\sqn{ x - x^*} + \sqn{y - y^*} \leq \epsilon.
	\end{equation}
\end{definition}

\section{Step I: Reformulation via Pointwise Conjugate Function}\label{sec:min}

In this section, we reformulate problem~\eqref{eq:main} as a particular convex minimization problem. This reformulation will be beneficial because minimization problems are typically easier to solve than minimax optimization problems. 

\subsection{Pointwise Conjugate Function}
We start by introducing {\em the pointwise conjugate function} which will be the main component of our problem reformulation.
Let function $\F(x,y)\colon \sX \times \sY \rightarrow \R$ be defined as
\begin{equation}\label{eq:Fhat}
	\F(x,y) = F(x,y) - \frac{\mu_x}{2}\sqn{x} + \frac{\mu_y}{2}\sqn{y}.
\end{equation}
One can observe that function $\F(x,y)$ is smooth, convex in $x$, and concave in $y$ due to \Cref{ass:convexity,ass:concavity,ass:smoothness}.
Now, the pointwise conjugate function $G(z,y) \colon \sX\times \sY \rightarrow \R$ is defined as follows:
\begin{equation}
	G(z,y) = \sup_{x\in \sX} \left[\<x,z> - r(x) - \F(x,y) + g(y) \right].
\end{equation}
One can observe that for fixed $y \in \sY$, function $G(\cdot, y)$ is nothing else but the Fenchel conjugate\footnote{Recall that for a convex function $h(x)$, Fenchel conjugate is defined as $h^*(z) = \sup_x [\<z,x> - h(x)]$.} of function $r(\cdot) + \F(\cdot,y) - g(y)$.
Moreover, function $G(z,y)$ is defined as a pointwise supremum of a family of convex and lower semi-continuous functions $\left\{\varphi_x(z,y) = \<x,z> - r(x) - F(x,y) + g(y)\mid x \in \sX\right\}$. Hence, $G(z,y)$ is also convex and lower semi-continuous function. The following lemma provides a characterization of the subdifferential of the pointwise conjugate function.
\begin{lemma}\label{lem:subdifferential}
	Let $z,x \in \sX$ and $ y, w \in \sY$ be arbitrary vectors that satisfy
	\begin{equation}
		z - \nabla_x \F(x,y) \in \partial r(x), \; w + \nabla_y \F(x,y) \in \partial g(y).
	\end{equation}
	Then, $G(z,y) = \<z,x> - r(x) - \F(x,y) + g(y)$ and $(x,w) \in \partial G(z,y)$.	
\end{lemma}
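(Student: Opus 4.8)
The plan is to verify the two claims directly from the definition of the pointwise conjugate function and the standard duality between subdifferentials and Fenchel conjugates. Fix the vectors $z, x \in \sX$ and $y, w \in \sY$ satisfying the two inclusions in the hypothesis. For the fixed value of $y$, introduce the shorthand $h_y(\cdot) = r(\cdot) + \F(\cdot, y) - g(y)$, which is a proper convex lower semi-continuous function of its argument, so that $G(z, y) = h_y^*(z)$ by definition. The first inclusion $z - \nabla_x \F(x, y) \in \partial r(x)$ can be rewritten, using that $\F(\cdot, y)$ is differentiable, as $z \in \partial r(x) + \nabla_x \F(x, y) = \partial h_y(x)$. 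By the Fenchel--Young equality (the case of equality in Fenchel--Young, equivalently the characterization $z \in \partial h_y(x) \iff x \in \partial h_y^*(z) \iff h_y(x) + h_y^*(z) = \<z, x>$), this gives both $h_y^*(z) = \<z, x> - h_y(x)$, i.e. $G(z, y) = \<z, x> - r(x) - \F(x, y) + g(y)$, which is the first assertion.

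For the subgradient claim, I would verify the subgradient inequality for $G$ at $(z, y)$ with candidate subgradient $(x, w)$ directly: I need $G(z', y') \geq G(z, y) + \<x, z' - z> + \<w, y' - y>$ for all $z' \in \sX, y' \in \sY$. Using the definition $G(z', y') = \sup_{x'} \varphi_{x'}(z', y')$, it suffices to lower-bound this supremum by evaluating at the specific point $x' = x$: $G(z', y') \geq \<x, z'> - r(x) - \F(x, y') + g(y')$. Subtracting $G(z, y) = \<x, z> - r(x) - \F(x, y) + g(y)$ from this bound, the terms $-r(x)$ cancel and what remains to be shown is
\begin{equation}
	\<x, z' - z> - \F(x, y') + \F(x, y) + g(y') - g(y) \geq \<x, z' - z> + \<w, y' - y>,
\end{equation}
i.e., after cancelling $\<x, z' - z>$, exactly $g(y') - g(y) \geq \<w, y' - y> + \F(x, y') - \F(x, y)$.

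It remains to prove this last inequality. Since $\F(\cdot, \cdot)$ is concave in its second argument (a consequence of \Cref{ass:concavity} applied to the definition~\eqref{eq:Fhat}), we have $\F(x, y') - \F(x, y) \leq \<\nabla_y \F(x, y), y' - y>$, so it is enough to show $g(y') - g(y) \geq \<w + \nabla_y \F(x, y), y' - y>$, which is precisely the subgradient inequality for $g$ at $y$ with subgradient $w + \nabla_y \F(x, y)$ — and this holds by the second hypothesis $w + \nabla_y \F(x, y) \in \partial g(y)$. Combining the displays yields the claimed subgradient inequality, hence $(x, w) \in \partial G(z, y)$.

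The argument is essentially routine; the only point requiring mild care is \emph{the interplay between the supremum and the subgradient inequality in the $y$-variable}, since $G$ is a supremum over $x'$ but the candidate subgradient component $x$ is a \emph{fixed} maximizer — one must be careful to evaluate the sup at the fixed $x$ (not at the maximizer for $(z', y')$) to get a valid lower bound, and then absorb the gap between $\F(x, \cdot)$ and its linearization using concavity rather than needing an exact identity. A secondary technical subtlety is ensuring that the first inclusion genuinely encodes $z \in \partial h_y(x)$, which uses the sum rule for subdifferentials in the trivial direction (a subgradient of $r$ plus the gradient of a differentiable function is a subgradient of the sum) — no qualification condition is needed for this inclusion.
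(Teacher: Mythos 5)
Your proof is correct and takes essentially the same approach as the paper. The only cosmetic difference is that you invoke the Fenchel--Young equality case by name for the first claim, whereas the paper re-derives it inline (subgradient inequality for $r$ plus convexity of $\F(\cdot,y)$); your verification of the subgradient inequality for $G$ — lower-bounding the sup at the fixed $x$, then combining concavity of $\F$ in $y$ with the subgradient inequality for $g$ — is step-for-step the paper's argument.
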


\subsection{Reformulation of the Minimax Optimization Problem}

Now, we introduce the following minimization problem:
\begin{equation}\label{eq:min}
	\min_{z \in \sX, y\in \sY} \left[P(z,y) = \frac{\mu_x^{-1}}{2}\sqn{z} + \frac{\mu_y}{2}\sqn{y} + G(z,y) \right]
\end{equation}
It turns out that this minimization problem can be seen as a reformulation of problem~\eqref{eq:main}. This is justified by the following lemma.
\begin{lemma}\label{lem:solution}
	Problem~\eqref{eq:min} has a unique solution $(z^*,y^*) \in \sX \times \sY$, where
	\begin{equation}\label{eq:zstar}
		z^* = -\mu_x x^*
	\end{equation}
	and $(x^*,y^*)$ is the unique solution of problem~\eqref{eq:main}.
\end{lemma}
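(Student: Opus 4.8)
The plan is to derive the first-order optimality conditions for problem~\eqref{eq:min} and show that they are equivalent to the optimality conditions~\eqref{eq:opt} of problem~\eqref{eq:main} under the substitution $z^* = -\mu_x x^*$. First I would note that $P(z,y)$ is a strongly convex function: the quadratic term $\frac{\mu_x^{-1}}{2}\sqn{z}$ is strongly convex in $z$, the term $\frac{\mu_y}{2}\sqn{y}$ is strongly convex in $y$, and $G(z,y)$ is convex and lower semi-continuous (as established right before \Cref{lem:subdifferential}), hence proper convex lsc after adding the coercive quadratics; therefore a unique minimizer $(z^*,y^*)$ exists. Wait — I should be careful here: $G$ is jointly convex in $(z,y)$? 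It is a supremum of the affine-in-$z$, convex-in-$y$ functions $\varphi_x$, so it is jointly convex in $(z,y)$; combined with the strictly convex quadratic part, $P$ is strictly convex and coercive, giving existence and uniqueness of the minimizer.

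Next I would write the optimality condition $0 \in \partial P(z^*,y^*)$, which (using the sum rule, valid since the quadratics are everywhere finite) becomes
\begin{equation}
	0 \in \begin{bmatrix} \mu_x^{-1} z^* \\ \mu_y y^* \end{bmatrix} + \partial G(z^*,y^*).
\end{equation}
So I need a subgradient $(x,w) \in \partial G(z^*,y^*)$ with $\mu_x^{-1} z^* + x = 0$ and $\mu_y y^* + w = 0$, i.e. $x = -\mu_x^{-1} z^*$ and $w = -\mu_y y^*$. This is where \Cref{lem:subdifferential} enters: I would invoke it with the candidate vectors and check that the hypotheses $z^* - \nabla_x \F(x,y^*) \in \partial r(x)$ and $w + \nabla_y \F(x,y^*) \in \partial g(y^*)$ translate, via the definition $\F(x,y) = F(x,y) - \frac{\mu_x}{2}\sqn{x} + \frac{\mu_y}{2}\sqn{y}$ (so $\nabla_x \F(x,y) = \nabla_x F(x,y) - \mu_x x$ and $\nabla_y \F(x,y) = \nabla_y F(x,y) + \mu_y y$), into precisely the system~\eqref{eq:opt}.

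Concretely, I would proceed in two directions. (Existence/characterization:) Let $(x^*,y^*)$ be the solution of~\eqref{eq:main} and set $z^* = -\mu_x x^*$, $x = -\mu_x^{-1}z^* = x^*$, $w = -\mu_y y^*$. Then $z^* - \nabla_x\F(x^*,y^*) = -\mu_x x^* - \nabla_x F(x^*,y^*) + \mu_x x^* = -\nabla_x F(x^*,y^*) \in \partial r(x^*)$ by~\eqref{eq:opt}, and similarly $w + \nabla_y\F(x^*,y^*) = -\mu_y y^* + \nabla_y F(x^*,y^*) + \mu_y y^* = \nabla_y F(x^*,y^*) \in \partial g(y^*)$ by~\eqref{eq:opt}. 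Hence by \Cref{lem:subdifferential}, $(x^*, w) = (-\mu_x^{-1}z^*, -\mu_y y^*) \in \partial G(z^*,y^*)$, which is exactly the inclusion $0 \in \partial P(z^*,y^*)$; so $(z^*,y^*)$ is a minimizer of~\eqref{eq:min}. (Uniqueness:) Strict convexity of $P$, noted above, forces this minimizer to be unique, so it coincides with the $(z^*,y^*)$ in the statement and $z^* = -\mu_x x^*$ as claimed.

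The main obstacle I anticipate is the justification of the subdifferential sum rule and the joint convexity/lower semi-continuity bookkeeping for $G$ — i.e. making sure the optimality condition really decouples as $0 \in (\mu_x^{-1}z^*,\, \mu_y y^*) + \partial G(z^*,y^*)$ and that \Cref{lem:subdifferential} supplies a subgradient in the required product form rather than merely some subgradient of the joint function. Once the correspondence between the hypotheses of \Cref{lem:subdifferential} and the optimality system~\eqref{eq:opt} is set up, the algebra (cancelling the $\pm\mu_x x$ and $\pm\mu_y y$ terms coming from $\F$) is routine.
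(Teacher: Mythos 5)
Your proposal is correct and follows essentially the same route as the paper's proof: translate the optimality conditions~\eqref{eq:opt} through the definition of $\F$, apply \Cref{lem:subdifferential} to obtain $(-\mu_x^{-1}z^*, -\mu_y y^*) \in \partial G(z^*,y^*)$, conclude $0 \in \partial P(z^*,y^*)$, and get uniqueness from strong convexity of $P$. You simply spell out the joint convexity of $G$ (note $\varphi_x$ is a \emph{separable} sum affine$(z)$ $+$ convex$(y)$, which is what makes it jointly convex) and the subdifferential sum rule, which the paper uses implicitly.
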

\Cref{lem:solution} implies that if we find an approximate solution $(z, y) \in \sX\times \sY$ to problem~\eqref{eq:min}, a pair of vectors $(-\mu_x^{-1} z, y) \in \sX\times \sY$ will be an approximate solution to the original minimax problem. 

The idea of reformulating the minimax optimization problem as a minimization problem is not new and has been used in the state-of-the-art works of \citet{lin2020near,wang2020improved,alkousa2019accelerated}. However, their reformulation is different from ours and has several disadvantages. In particular, it does not allow for building the optimal algorithm for solving problem~\eqref{eq:main}. We provide a detailed discussion of this in the Appendix.

\section{Step II: Accelerated Proximal Point Method}\label{sec:pp}

\begin{algorithm}[t]
	\caption{Accelerated Gradient Method}
	\label{alg:nesterov}
	\begin{algorithmic}[1]
		\State {\bf input:} $z^0 = z_f^0 \in \sX, y^0 = y_f^0\in \sY$
		\State {\bf parameters:} $\alpha \in (0,1]$, $\eta_z,\eta_y,\theta_z, \theta_y > 0$, $K \in \{1,2,\ldots\}$
		\For{$k = 0,1,2,\ldots,K-1$}
		\State$(z_g^k,y_g^k) = \alpha (z^k,y^k) + (1-\alpha) (z_f^k, y_f^k)$
		\State $z_f^{k+1} = z_g^k - \theta_z \nabla_z P(z_g^k,y_g^k)$ \label{nesterov:line:z:1}
		\State $y_f^{k+1} = y_g^k - \theta_y\nabla_y P(z_g^k,y_g^k)$ \label{nesterov:line:y:1}
		\State$z^{k+1} = z^k + \eta_z\mu_z (z_g^k - z^k) + \eta_z\theta_z^{-1}(z_f^{k+1} - z_g^k)$ \label{nesterov:line:z:2}
		\State $y^{k+1} = y^k + \eta_y\mu_y (y_g^k - y^k) + \eta_y\theta_y^{-1}(y_f^{k+1} - y_g^k)$\label{nesterov:line:y:2}
		\EndFor
		\State {\bf output:} $(z^K, y^K)$
	\end{algorithmic}
\end{algorithm}

\begin{algorithm}[t]
	\caption{Accelerated Proximal Point Algorithm}
	\label{alg:pp}
	\begin{algorithmic}[1]
		\State {\bf input:} $z^0 = z_f^0 \in \sX, y^0 = y_f^0\in \sY$
		\State {\bf parameters:} $\alpha \in (0,1]$, $\eta_z,\eta_y,\theta_y > 0$, $K \in \{1,2,\ldots\}$
		\For{$k = 0,1,2,\ldots,K-1$}
		\State $(z_g^k,y_g^k) = \alpha (z^k,y^k) + (1-\alpha) (z_f^k, y_f^k)$ \label{pp:line:convex}
		\State Find $(x_f^{k+1}, y_f^{k+1}, z_f^{k+1},w_f^{k+1}) \in \sX\times\sY\times \sX \times \sY$ that satisfy \eqref{pp:monteiro-svaiter} \label{pp:line:aux}
		\State $z^{k+1} = z^k + \eta_z \mu_x^{-1} (z_f^{k+1} - z^k) - \eta_z (x_f^{k+1} + \mu_x^{-1} z_f^{k+1})$\label{pp:line:z} 
		\State $y^{k+1} = y^k + \eta_y \mu_y (y_f^{k+1} - y^k) - \eta_y (w_f^{k+1} + \mu_y y_f^{k+1})$ \label{pp:line:y} 
		\EndFor
		\State {\bf output:} $(z^K, y^K)$
	\end{algorithmic}
\end{algorithm}

In this section, we develop the main algorithmic framework for solving problem~\eqref{eq:min}, which is formalized as \Cref{alg:pp}. We give the intuition behind the development of \Cref{alg:pp} and provide its theoretical analysis. Further, in \Cref{sec:foam}, we will use this algorithmic framework to develop the first optimal algorithm for solving main problem~\eqref{eq:main}.

\subsection{Nesterov Acceleration}

It is well-known that Accelerated Gradient Method of \citet{nesterov1983method,nesterov2003introductory} is the optimal algorithm for solving smooth (strongly-)convex minimization problems. Therefore, we could try to apply this method to solving problem~\eqref{eq:min}, which is formalized as \Cref{alg:nesterov}.
Note that we used the notation $\mu_z = \mu_x^{-1}$ in \Cref{alg:nesterov}, which is the strong convexity parameter of $P(z,y)$ in $z$. Unfortunately, function $P(z,y)$ can be non-smooth,
and the gradient $\nabla P(z_g^k,y_g^k)$ can be undefined. It means that \Cref{alg:nesterov} cannot be applied to problem~\eqref{eq:min}.

\subsection{Moreau-Yosida Regularization}

In order to avoid the issues caused by the non-smoothness of function $P(z,y)$, we use the Moreau-Yosida regularization \citep{moreau1962fonctions,yosida2012functional}. Consider a function $P^{\theta_z,\theta_y}(z,y)$ defined in the following way:
\begin{equation}
	P^{\theta_z,\theta_y}(z,y) = \min_{z^+ \in \sX, y^+ \in \sY} \frac{1}{2\theta_z}\sqn{z^+ - z} + \frac{1}{2\theta_y}\sqn{y^+ - y} + P(z^+,y^+),
\end{equation}
where $\theta_z,\theta_y > 0$. Function $P^{\theta_z,\theta_y}(z,y)$ is called the Moreau envelope of function $P(z,y)$. The Moreau envelope has two crucial properties. First, it is a smooth function. Second, it has the same minimizers as function $P(z,y)$:
\begin{equation}\label{eq:moreau_min}
	(z^*,y^*) = \argmin_{z\in\sX, y\in\sY} P^{\theta_z,\theta_y}(z,y).
\end{equation}
The latter means that we could apply Accelerated Gradient Method to problem~\eqref{eq:moreau_min}, which would give us an efficient algorithm for solving problem~\eqref{eq:min}. Further, we are going to construct such an algorithm.

\subsection{Construction of the Algorithm}

We start the construction of our algorithm by computing the gradient $\nabla P^{\theta_z,\theta_y}(z_g^k,y_g^k)$. 
The theory of the Moreau-Yosida regularization \citep{lemarechal1997practical} suggests that the gradient of the Moreau envelope can be computed in the following way:
\begin{equation}
	\nabla P^{\theta_z,\theta_y}(z_g^k,y_g^k) = \vect{\theta_z^{-1}(z_g^k - z_f^{k+1})\\ \theta_y^{-1}(y_g^k - y_f^{k+1})},
\end{equation}
where $(z_f^{k+1},y_f^{k+1}) \in \sX\times \sY$ is computed via the following auxiliary minimization problem:
\begin{equation}\label{eq:aux}
	(z_f^{k+1}, y_f^{k+1}) = \argmin_{z \in \sX, y \in \sY} \frac{1}{2\theta_z}\sqn{z - z_g^k} + \frac{1}{2\theta_y}\sqn{y - y_g^k} + P(z,y).
\end{equation}
%Procedure~\eqref{eq:6} is 

Further, we choose parameter $\theta_z = \mu_z^{-1} = \mu_x$ and write the first-order optimality conditions for this problem using the definition of function $P(z,y)$:
\begin{equation}\label{eq:8}
	\vect{\mu_x^{-1}(z_f^{k+1} - z_g^k) +  \mu_x^{-1} z_f^{k+1}\\\theta_y^{-1}(y_f^{k+1} - y_g^k) + \mu_y y_f^{k+1} } \in-\partial G(z_f^{k+1}, y_f^{k+1}).
\end{equation}
The latter condition involves the subdifferential $\partial G(z,y)$. Hence, we can rewrite this condition using \Cref{lem:subdifferential}, which provides the characterization of $\partial G(z,y)$\footnote{To be precise, \Cref{lem:subdifferential} implies the relation \eqref{eq:9} $\Rightarrow$ \eqref{eq:8} rather than the equivalence \eqref{eq:9} $\Leftrightarrow$ \eqref{eq:8}. However, this is not an issue because we provide the intuition behind the algorithm development in this section. The rigorous proofs are postponed to the Appendix.}:
%\begin{equation}\label{eq:9}
%		\begin{split}
%			&z_f^{k+1} - \nabla_x \F(x_f^{k+1}, y_f^{k+1}) \in \partial r(x_f^{k+1}),\\
%			&w_f^{k+1} + \nabla_y \F(x_f^{k+1}, y_f^{k+1}) \in \partial(y_f^{k+1}),\\
%			& x_f^{k+1} + \mu_x^{-1}(z_f^{k+1} - z_g^k) +  \mu_x^{-1} z_f^{k+1} = 0,\\
%			&w_f^{k+1} + \theta_y^{-1}(y_f^{k+1} - y_g^k) + \mu_y y_f^{k+1}  = 0.
%		\end{split}
%\end{equation}
\begin{equation}\label{eq:9}
	\begin{alignedat}{2}
		z_f^{k+1} - \nabla_x \F(x_f^{k+1}, y_f^{k+1}) &\in \partial r(x_f^{k+1}),&\qquad
		x_f^{k+1} + \mu_x^{-1}(z_f^{k+1} - z_g^k) +  \mu_x^{-1} z_f^{k+1} &= 0,\\
		w_f^{k+1} + \nabla_y \F(x_f^{k+1}, y_f^{k+1}) &\in \partial g(y_f^{k+1}),&
		w_f^{k+1} + \theta_y^{-1}(y_f^{k+1} - y_g^k) + \mu_y y_f^{k+1}  &= 0.
	\end{alignedat}
\end{equation}
where $x_f^{k+1} \in \sX$ and $w_f^{k+1}\in \sY$  are auxiliary vectors. From \eqref{eq:9} we get  
\begin{align*}
	\mu_x^{-1}(z_f^{k+1} - z_g^k) &= - (x_f^{k+1} + \mu_x^{-1} z_f^{k+1}),\\
	\theta_y^{-1}(y_f^{k+1} - y_g^k) &= -(w_f^{k+1} + \mu_y y_f^{k+1}),
\end{align*}
which we plug into \cref{nesterov:line:z:2,nesterov:line:y:2} of \Cref{alg:nesterov}.

Finally, we replace the computation of $(z_f^{k+1} ,y_f^{k+1})$ on \cref{nesterov:line:z:1,nesterov:line:y:1} of \Cref{alg:nesterov} using condition~\eqref{eq:9}. It turns out that we can use the following relaxed version of condition~\eqref{eq:9} without hurting the convergence properties of the resulting algorithm:
\begin{equation}\label{pp:monteiro-svaiter}
	\begin{cases}
		\begin{split}
				&z_f^{k+1} - \nabla_x \F(x_f^{k+1}, y_f^{k+1}) \in \partial r(x_f^{k+1}),\\
				&w_f^{k+1} + \nabla_y \F(x_f^{k+1}, y_f^{k+1}) \in \partial g(y_f^{k+1}),\\
				&\frac{8}{\mu_x}\sqn{\Delta_x^k}+\theta_y\sqn{\Delta_y^k}
				\leq
				\frac{\mu_x}{8}\sqn{x_f^{k+1} +\mu_x^{-1} z_g^k}+\theta_y^{-1}\sqn{y_f^{k+1} - y_g^k},\\
			\end{split}
	\end{cases}
\end{equation}
where $\Delta_x^k$ and $\Delta_y^k$ are defined as follows:
\begin{equation}
	\begin{cases}
			\begin{split}
					\Delta_x^k &= z_f^{k+1} + \frac{\mu_x}{2}(x_f^{k+1} - \mu_x^{-1} z_g^k),\\
					\Delta_y^k &= w_f^{k+1} + \mu_y y_f^{k+1} + \theta_y^{-1} (y_f^{k+1} - y_g^k).
				\end{split}
		\end{cases}
\end{equation}

\subsection{Convergence of the Algorithm}

After applying all the modifications mentioned above to \Cref{alg:nesterov}, we obtain \Cref{alg:pp}.
\Cref{pp:thm} provides the iteration complexity of \Cref{alg:pp}. The proof of \Cref{pp:thm} can be found in the Appendix.

\begin{theorem}\label{pp:thm}
	Let $\eta_z, \eta_y$ be defined as
	\begin{equation}\label{pp:eta}
		\eta_z = \mu_x/2, \quad \eta_y = \min\left\{{1}/({2\mu_y}), {\theta_y}/({2\alpha})\right\}.
	\end{equation}
	Then, to find an $\epsilon$-accurate solution of problem~\eqref{eq:main}, \Cref{alg:pp} requires the following number of iterations:
	\begin{equation}
		K = \cO\left(\max\left\{\frac{1}{\alpha}, \frac{\alpha}{\theta_y\mu_y}\right\}\log\frac{1}{\epsilon}\right).
%		K = \cO\left(\max\left\{{1}/{\alpha}, {\alpha}/({\theta_y\mu_y})\right\}\log \epsilon^{-1}\right).
	\end{equation}
	In this case, the $\epsilon$-accurate solution will be given as $(-\mu_x^{-1}z^K, y^K)$, where $(z^K,y^K)$ is the output of \Cref{alg:pp}.
%	Then, the pair of vectors $(-\mu_x^{-1}z^K, y^K)$ will be an $\epsilon$-accurate solution of problem~\eqref{eq:main}, where the total number of iterations of \Cref{alg:pp} is given as

\end{theorem}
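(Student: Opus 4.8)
The plan is to analyze Algorithm \ref{alg:pp} as an inexact accelerated proximal point method applied to the Moreau envelope $P^{\theta_z,\theta_y}$ with $\theta_z = \mu_x$, using a Lyapunov (potential function) argument. The natural Lyapunov function is of the form
\begin{equation*}
	\Psi^k = A_k\left(P(z_f^k,y_f^k) - P(z^*,y^*)\right) + \frac{B_k}{2}\mu_x^{-1}\sqn{z^k - z^*} + \frac{C_k}{2}\sqn{y^k - y^*},
\end{equation*}
with geometrically growing coefficients $A_k,B_k,C_k$ dictated by the strong convexity parameters $\mu_z = \mu_x^{-1}$ in $z$ and $\mu_y$ in $y$, and by $\alpha$. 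The key step is to show that a single iteration of Algorithm \ref{alg:pp} contracts this potential by a constant factor, i.e. $\Psi^{k+1} \le (1 - \rho)\Psi^k$ with $\rho = \Theta\!\left(\min\{\alpha, \theta_y\mu_y/\alpha\}\right)$; iterating this for $K$ steps and unfolding the definition of $\epsilon$-accuracy (via \Cref{def:accuracy} and $z^* = -\mu_x x^*$ from \Cref{lem:solution}, so that $\mu_x^{-1}\sqn{z^K - z^*} = \mu_x\sqn{-\mu_x^{-1}z^K - x^*}$) then yields the stated $K = \cO(\max\{1/\alpha, \alpha/(\theta_y\mu_y)\}\log(1/\epsilon))$.

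The core technical work is the one-step descent inequality. First I would use \Cref{lem:subdifferential} to certify that the vectors produced by the inclusion conditions in \eqref{pp:monteiro-svaiter} give a genuine subgradient: from $z_f^{k+1} - \nabla_x\F(x_f^{k+1},y_f^{k+1}) \in \partial r(x_f^{k+1})$ and $w_f^{k+1} + \nabla_y\F(x_f^{k+1},y_f^{k+1}) \in \partial g(y_f^{k+1})$ we get $(x_f^{k+1}, w_f^{k+1}) \in \partial G(z_f^{k+1}, y_f^{k+1})$, hence
\begin{equation*}
	\left(x_f^{k+1} + \mu_x^{-1}z_f^{k+1},\; w_f^{k+1} + \mu_y y_f^{k+1}\right) \in \partial P(z_f^{k+1}, y_f^{k+1}).
\end{equation*}
Then, exactly as in the Monteiro--Svaiter / accelerated-proximal-point analysis, the update lines \ref{pp:line:z}--\ref{pp:line:y} are a mirror-descent step with this subgradient, and the inexactness is controlled by the residuals $\Delta_x^k, \Delta_y^k$, which measure how far $(z_f^{k+1}, y_f^{k+1})$ is from solving the exact auxiliary problem \eqref{eq:aux}. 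I would combine: (a) the three-point identity for the $z$- and $y$-mirror steps; (b) strong convexity of $P$ in $z$ (modulus $\mu_x^{-1}$) and in $y$ (modulus $\mu_y$) to produce the contraction in the quadratic terms; (c) convexity of $P$ together with the relaxed accuracy bound — the third line of \eqref{pp:monteiro-svaiter}, $\frac{8}{\mu_x}\sqn{\Delta_x^k} + \theta_y\sqn{\Delta_y^k} \le \frac{\mu_x}{8}\sqn{x_f^{k+1} + \mu_x^{-1}z_g^k} + \theta_y^{-1}\sqn{y_f^{k+1} - y_g^k}$ — to absorb all cross terms coming from inexactness, using Young's inequality with the carefully tuned constants $8$ and $1/8$. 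The choice $\eta_z = \mu_x/2$ and $\eta_y = \min\{1/(2\mu_y), \theta_y/(2\alpha)\}$ from \eqref{pp:eta} is precisely what makes the step sizes small enough for the squared-distance terms to telescope while large enough to give the claimed rate.

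The main obstacle I expect is bookkeeping the inexactness through the accelerated coupling: unlike a plain proximal point step, here the extrapolation point $(z_g^k, y_g^k) = \alpha(z^k,y^k) + (1-\alpha)(z_f^k,y_f^k)$ mixes the "fast" and "averaged" sequences, so the error terms $\Delta_x^k,\Delta_y^k$ appear inside inner products with $z^{k+1} - z^*$ and $y^{k+1} - y^*$ and must be split — part against the contraction slack in the quadratic terms, part against the function-value decrease $A_{k+1}(P(z_f^{k+1},y_f^{k+1}) - P^*)$ — in a way that keeps every coefficient nonnegative for the given parameter ranges. A secondary subtlety is the asymmetry between the $z$-block (where $\theta_z = \mu_x$ is fixed and the relevant curvature is $\mu_x^{-1}$) and the $y$-block (where $\theta_y$ is a free parameter, producing the two-sided $\min$ in $\eta_y$ and the $\max\{1/\alpha, \alpha/(\theta_y\mu_y)\}$ in the final rate); I would handle the two coordinates with separate but parallel estimates and only combine them at the end. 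Routine computations — expanding squares, verifying the recursions for $A_k,B_k,C_k$, and translating the potential decay into \Cref{def:accuracy} — I would defer to the Appendix.
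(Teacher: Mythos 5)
Your proposal is correct and follows essentially the same route as the paper: the paper's proof (Lemmas~\ref{pp:lem:1}--\ref{pp:lem:3}) uses exactly the constant-coefficient Lyapunov function $\cL^k = \frac{1}{\eta_z}\sqn{z^k-z^*}+\frac{1}{\eta_y}\sqn{y^k-y^*}+\frac{2}{\alpha}\bigl(P(z_f^k,y_f^k)-P(z^*,y^*)\bigr)$, handles the $z$- and $y$-blocks in two parallel lemmas before combining them, certifies the subgradient $(x_f^{k+1}+\mu_x^{-1}z_f^{k+1},\,w_f^{k+1}+\mu_y y_f^{k+1})\in\partial P(z_f^{k+1},y_f^{k+1})$ via Lemma~\ref{lem:subdifferential}, and absorbs the inexactness through condition~\eqref{pp:monteiro-svaiter} to get the contraction $\cL^{k+1}\le\bigl(1-\max\{2/\alpha,\,2\alpha/(\theta_y\mu_y)\}^{-1}\bigr)\cL^k$, which is your $\rho=\Theta(\min\{\alpha,\theta_y\mu_y/\alpha\})$. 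The only cosmetic difference is that you frame the potential with time-varying coefficients $A_k,B_k,C_k$ (estimating-sequence style) while the paper keeps them fixed and shows explicit geometric decay; after rescaling these are the same argument.
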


Unfortunately, \Cref{alg:pp} cannot be applied to solving problem~\eqref{eq:main} in its current form because it requires finding vectors $(x_f^{k+1},y_f^{k+1},z_f^{k+1},w_f^{k+1})$ that satisfy condition \eqref{pp:monteiro-svaiter} on \cref{pp:line:aux} at each iteration. Further, we will show that finding these vectors can be seen as finding an approximate solution to a particular monotone inclusion problem. In \Cref{sec:inclusion}, we will provide an optimal algorithm for solving such monotone inclusions. In \Cref{sec:foam}, we will show how to combine this algorithm with \Cref{alg:pp} and obtain the first optimal algorithm for solving main problem~\eqref{eq:main}.

\section{Step III: Operator Norm Reduction in Monotone Inclusions}\label{sec:inclusion}

\begin{algorithm}[t]
	\caption{Extra Anchored Gradient for Monotone Inclusions}
	\label{alg:eag}
	\begin{algorithmic}[1]
		\State {\bf input:} $u^{-1} \in \R^d$
		\State {\bf parameters:} $\lambda > 0$, $T \in \{1,2,\ldots\}, \{\beta_t\}_{t=0}^{T-1} \subset (0,1)$
		\State $u^0 = \J_{\lambda B} (u^{-1} - \lambda A(u^{-1}))$\label{eag:line:0}
		\State $a^0 = A(u^0)$
		\State$b^0 = \frac{1}{\lambda}(u^{-1} - \lambda A(u^{-1}) - u^0)$\label{eag:line:3} \algorithmiccomment{$b^0 \in B(u^0)$}
		\For{$t = 0,1,2\ldots, T-1$}
		\State$u^{t+1/2} = u^t + \beta_t(u^0 - u^t)- \lambda (a^t + b^t)$\label{eag:line:1} 
		\State  $u^{t+1} = \J_{\lambda B}(u^t + \beta_t(u^0 - u^t) - \lambda A (u^{t+1/2}))$
		\State $a^{t+1} = A(u^{t+1})$
		\State$b^{t+1} = \frac{1}{\lambda}(u^t + \beta_t(u^0 - u^t) - \lambda A (u^{t+1/2}) - u^{t+1})$\label{eag:line:2} \algorithmiccomment{$b^{t+1} \in B(u^{t+1})$}
		\EndFor
		\State {\bf output:} $(u^T, a^T + b^T)$ \algorithmiccomment{$b^T \in B(u^T)$}
	\end{algorithmic}
\end{algorithm}

In this section, we consider the following monotone inclusion problem:
\begin{equation}\label{eq:inclusion}
	\text{find } u^* \in \R^d \quad \text{such that}\quad 0 \in A(u^*) + B(u^*),
\end{equation}
where $A(u),B(u)\colon \R^d\rightrightarrows \R^d$  are maximally monotone mappings. We are interested in the case when $A(u)$ is single-valued and Lipschitz continuous. The properties of operators $A(u)$ and $B(u)$ are formalized through the following assumptions.
\begin{assumption}\label{ass:A}
	Mapping $A(u)\colon \R^d \rightarrow \R^d$ is single-valued, $M$-Lipschitz and monotone. That is, for all $u_1,u_2 \in \R^d$,
	$
		\<A(u_1) - A(u_2), u_1 - u_2> \geq 0
	$
	and
	$
		\norm{A(u_1) - A(u_2)} \leq M\norm{u_1 - u_2}.
	$
\end{assumption}
\begin{assumption}\label{ass:B}
	Mapping $B(u)\colon \R^d \rightrightarrows \R^d$ is maximally monotone and possibly multivalued. That is, mapping $B(u)$ satisfies the following conditions:
	\begin{enumerate}[topsep=0em]
		\item $B(u)$ is monotone, i.e., for all $u_1,u_2 \in \dom B$, $b_1 \in B(u_1)$, $b_2 \in B(u_2)$ the following inequality holds:
		$
			\<u_1 - u_2, b_1 - b_2> \geq 0,
		$
		where $\dom B = \{u \in \R^d \mid B(u) \neq \emptyset\}$.
		\item The graph
		$
			\mathrm{gph} B = \{ (u,b) \in \R^d \times \R^d \mid b \in B(u)\}
		$
		is not properly contained in the graph of any other monotone mapping on $\R^d$.
	\end{enumerate}
\end{assumption}

Note that mapping $A(u)$ is also maximally monotone because it is monotone and continuous \citep[Example 12.7]{rockafellar2009variational}. 
Further, we will use an operator $\J_{\lambda B}(u) \colon \R^d \rightrightarrows \R^d$ which is defined as 
\begin{equation}
	u^+ \in \J_{\lambda B}(u)  \quad \text{if and only if}\quad \lambda^{-1}(u - u^+) \in  B(u^+),
\end{equation}
where $\lambda > 0$.
This mapping is called the resolvent of mapping $B(u)$.
The maximal monotonicity of $B(u)$ implies that the resolvent $\J_{\lambda B}(u)$ is single-valued for all $u\in \R^d$ \citep[Theorem 12.12]{rockafellar2009variational}.

Now, we are ready to present \Cref{alg:eag} for solving monotone inclusion problem \eqref{eq:inclusion}. The design of our algorithm is based on the Extra Anchored Gradient Algorithm of \citet{yoon2021accelerated}. The critical difference between the algorithm of \citet{yoon2021accelerated} and \Cref{alg:eag} is that the algorithm of \citet{yoon2021accelerated} can be applied to problem~\eqref{eq:inclusion} in the case $B(u) \equiv \{0\}$ only.
Therefore, our \Cref{alg:eag} can be seen as an extension of the algorithm of \citet{yoon2021accelerated} for general monotone inclusion problems of the form~\eqref{eq:inclusion}.

The following theorem provides the convergence guarantees for \Cref{alg:eag}. The proof of the theorem can be found in the Appendix.
\begin{theorem}\label{eag:thm}
	Assume that there exists at least a single solution $u^*$ to problem~\eqref{eq:inclusion}. Let $\beta_t$ be defined as follows
	\begin{equation}\label{eag:beta}
		\beta_t ={2}/(t + 3).
	\end{equation}
	Let $\lambda$ be defined as
	\begin{equation}\label{eag:lambda}
		\lambda = {1}/({\sqrt{5}M}).
	\end{equation}
	Then, the following inequality holds
	\begin{equation}
		\sqn{a^T+b^T} \leq
		\frac{288M^2}{(T+1)^2}\sqn{u^{-1} - u^*},
	\end{equation}
	where $a^T = A(u^T)$ and $b^T \in B(u^T)$.
\end{theorem}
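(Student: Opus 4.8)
The plan is to follow the potential-function (Lyapunov) strategy of \citet{yoon2021accelerated}, adapted to the composite setting where a nonzero maximally monotone $B$ is present via its resolvent. Throughout the analysis I will carry along the multivalued elements $b^t \in B(u^t)$ explicitly, and treat the pair $(a^t + b^t)$ — the full "residual'' of the inclusion — as the quantity whose norm we want to contract, exactly as in the single-operator anchored-gradient proof one tracks $\|A(u^t)\|^2$. The central object will be a potential of the shape
\begin{equation*}
	\Phi_t = \alpha_t \sqn{a^t + b^t} + \beta_t'\Dotprod{a^t + b^t, u^t - u^0} + \text{(terms tying } u^t \text{ to } u^* \text{)},
\end{equation*}
with carefully chosen coefficients $\alpha_t \propto (t+1)(t+2)$ or similar, so that $\Phi_t$ is nonincreasing along the iteration and, at the same time, $\Phi_0$ is controlled by $\sqn{u^{-1} - u^*}$ while $\Phi_T$ dominates $(T+1)^2 \sqn{a^T + b^T}$ up to the stated constant.

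First I would record the basic identities produced by the algorithm: from \cref{eag:line:3,eag:line:2}, the vectors $b^0, b^{t+1}$ defined there genuinely lie in $B(u^0), B(u^{t+1})$ by the definition of the resolvent $\J_{\lambda B}$, so $a^t + b^t$ is a legitimate residual; and \cref{eag:line:1,eag:line:0} give the extragradient-style relations
$u^{t+1/2} = u^t + \beta_t(u^0 - u^t) - \lambda(a^t + b^t)$ and
$u^{t+1} = u^t + \beta_t(u^0-u^t) - \lambda A(u^{t+1/2}) - \lambda b^{t+1}$.
Second, I would exploit monotonicity in two places: monotonicity of $B$ applied to the pairs $(u^{t+1}, b^{t+1})$ and $(u^t, b^t)$ (and to $(u^*, b^*)$ with $b^* \in B(u^*)$, $-b^* = A(u^*)$), and monotonicity plus $M$-Lipschitzness of $A$ applied across the points $u^t, u^{t+1/2}, u^{t+1}$. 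The Lipschitz bound is where the step size $\lambda = 1/(\sqrt5 M)$ enters: the cross terms $\lambda\Dotprod{A(u^{t+1/2}) - A(u^t), \cdot}$ must be absorbed by squared terms, and the choice $5\lambda^2 M^2 = 1$ is precisely what makes the residual sum-of-squares coefficients nonnegative. Third, combining these inequalities with the algebraic update for $u^{t+1} - u^t$, I would show the one-step decrease $\Phi_{t+1} \le \Phi_t$; telescoping from $t=0$ to $T-1$ yields $\Phi_T \le \Phi_0$. Finally I would unpack $\Phi_T \ge c(T+1)^2\sqn{a^T+b^T}$ and $\Phi_0 \le C\sqn{u^{-1}-u^*}$ (the latter using that $u^0 = \J_{\lambda B}(u^{-1} - \lambda A(u^{-1}))$ is nonexpansive in $u^{-1}$ and that $u^*$ is a fixed point of the same map), and read off the constant $288 = C/c$ after optimizing the free coefficients.

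The main obstacle I anticipate is the handling of the multivalued part $B$: in the $B \equiv 0$ case of \citet{yoon2021accelerated} the anchored-gradient potential is designed around $A$ alone, whereas here every cross term and every use of co-coercivity-type inequalities must be re-derived for the \emph{sum} $a^t + b^t$, using only plain monotonicity of $B$ (no Lipschitz regularity is available for $B$) and the resolvent identity to express $b^{t+1}$ in terms of already-computed quantities. Concretely, the delicate step is showing that the inner products $\Dotprod{b^{t+1} - b^t, \cdot}$ and $\Dotprod{b^{t+1} - b^*, u^{t+1} - u^*}$ that appear after expanding $\Phi_{t+1} - \Phi_t$ have the right sign or can be bounded by telescoping squared terms — this is what forces the specific form of the extragradient correction in \cref{eag:line:1} (subtracting $\lambda(a^t + b^t)$ rather than $\lambda a^t$) and ultimately why the algorithm is stated the way it is. Once the potential inequality is established with explicit constants, the rest (telescoping, bounding $\Phi_0$, extracting $\sqn{a^T+b^T}$) is routine bookkeeping of the coefficient sequences $\beta_t = 2/(t+3)$.
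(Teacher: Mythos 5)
Your plan follows essentially the same route as the paper's: a Yoon--Ryu-style anchored potential built around the combined residual $a^t+b^t$, a per-step geometric decrease driven by $\beta_t = 2/(t+3)$ (equivalently, growing coefficients $\propto (t+1)(t+2)$ after renormalization), the step size $\lambda = 1/(\sqrt{5}M)$ chosen so that $5\lambda^2 M^2 = 1$ absorbs the Lipschitz cross terms, and a separate initialization estimate via firm nonexpansiveness of $\J_{\lambda B}$ relating $u^0$ to $u^{-1}$ and $u^*$. One small but consequential correction: the paper's potential $\cU^t = \langle a^t+b^t,\, u^t - u^0\rangle + \frac{\lambda}{2\beta_t}\sqn{a^t+b^t}$ is anchored only to $u^0$ and contains \emph{no} $u^*$-dependent terms --- $u^*$ enters only at the two endpoints of the telescope (through firm nonexpansiveness when bounding $\cU^0$, and through monotonicity of $A+B$ at the pair $(u^T,u^*)$ plus Young's inequality when extracting $\sqn{a^T+b^T}$ from $\cU^T$) --- so the ``terms tying $u^t$ to $u^*$'' you tentatively included in $\Phi_t$ are unnecessary and would actually obstruct the clean one-step decrease, which relies on the monotonicity inequality $\langle u^{t+1}-u^t,\,(a^{t+1}+b^{t+1})-(a^t+b^t)\rangle \geq 0$ applied to the combined operator rather than any comparison against $u^*$.
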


\section{Final Step: The First Optimal Algorithm for Minimax Optimization}\label{sec:foam}

In this section, we construct the first optimal algorithm for solving main problem~\eqref{eq:main}. In order to do this,
we use \Cref{alg:eag} to compute vectors $(x_f^{k+1},y_f^{k+1},z_f^{k+1},w_f^{k+1})$ on \cref{pp:line:aux} of \Cref{alg:pp}. Further, we describe the construction of our algorithm in detail.

\begin{algorithm}[t]
	\caption{FOAM: The First Optimal Algorithm for Minimax Optimization}
	\label{alg:foam}
	\begin{algorithmic}[1]
		\State {\bf input:} $z^0 = z_f^0 \in \sX$, $y^0 = y_f^0 \in \sY$
		\State {\bf parameters:} $\alpha \in (0,1]$, $\eta_z,\eta_y,\theta_y> 0$, $\{\beta_t\}_{t=0}^{\infty} \subset (0,1)$, $\lambda,\gamma_x,\gamma_y > 0$, $K \in \{1,2,\ldots\}$
		\For{$k=0,1,2,\ldots,K-1$}
		\State $(z_g^k,y_g^k) = \alpha (z^k,y^k) + (1-\alpha) (z_f^k, y_f^k)$
		\State $(x^{k,-1}, y^{k,-1}) = (-\mu_x^{-1} z_g^k, y_g^k)$ \label{foam:line:xyi}
		\State $x^{k,0} = \prox_{\gamma_x\lambda r(\cdot)}(x^{k,-1} - \gamma_x \lambda a_x^k(x^{k,-1}, y^{k,-1}))$
		\State $y^{k,0} = \prox_{\gamma_y \lambda g(\cdot)}(y^{k,-1} - \gamma_y \lambda a_y^k(x^{k,-1}, y^{k,-1}))$
		\State $b_x^{k,0} = \frac{1}{\gamma_x\lambda}(x^{k,-1} - \gamma_x \lambda a_x^k(x^{k,-1}, y^{k,-1}) - x^{k,0})$
		\State $b_y^{k,0} = \frac{1}{\gamma_y\lambda}(y^{k,-1} - \gamma_y \lambda a_y^k(x^{k,-1},y^{k,-1}) - y^{k,0})$
		\State $t = 0$
		\While{condition \eqref{foam:while} is not satisfied}\label{foam:line:while}
		\State $x^{k,t+1/2} = x^{k,t} + \beta_t(x^{k,0} - x^{k,t})- \gamma_x\lambda (a_x^k(x^{k,t}, y^{k,t}) +b_x^{k,t})$
		\State $y^{k,t+1/2} = y^{k,t}+ \beta_t(y^{k,0} - y^{k,t}) - \gamma_y\lambda (a_y^k(x^{k,t}, y^{k,t}) +b_y^{k,t})$
		\State $x^{k,t+1} = \prox_{\gamma_x\lambda r(\cdot)}(x^{k,t}+\beta_t(x^{k,0} - x^{k,t}) - \gamma_x \lambda a_x^k(x^{k,t+1/2}, y^{k,t+1/2}))$
		\State $y^{k,t+1} = \prox_{\gamma_y \lambda g(\cdot)}(y^{k,t} +\beta_t(y^{k,0} - y^{k,t})- \gamma_y \lambda a_y^k(x^{k,t+1/2}, y^{k,t+1/2}))$
		\State $b_x^{k,t+1} = \frac{1}{\gamma_x\lambda}(x^{k,t} +\beta_t(x^{k,0} - x^{k,t})- \gamma_x \lambda a_x^k(x^{k,t+1/2}, y^{k,t+1/2}) - x^{k,t+1})$
		\State $b_y^{k,t+1} = \frac{1}{\gamma_y\lambda}(y^{k,t}+\beta_t(y^{k,0} - y^{k,t}) - \gamma_y \lambda a_y^k(x^{k,t+1/2},y^{k,t+1/2}) - y^{k,t+1})$
		\State $t = t+1$
		\EndWhile
		\State $t^k = t$ \label{foam:line:t}
		\State $(x_f^{k+1},y_f^{k+1}) = (x^{k,t^k},y^{k,t^k})$ \label{foam:line:xyf}
		\State $(z_f^{k+1},w_f^{k+1}) = ( \nabla_x \F(x_f^{k+1}, y_f^{k+1}) + b_x^{k,t^k}, -\nabla_y \F(x_f^{k+1}, y_f^{k+1}) + b_y^{k,t^k})$ \label{foam:line:zwf}
		\State $z^{k+1} = z^k + \eta_z \mu_x^{-1} (z_f^{k+1} - z^k) - \eta_z (x_f^{k+1} + \mu_x^{-1} z_f^{k+1})$
		\State $y^{k+1} = y^k + \eta_y \mu_y (y_f^{k+1} - y^k) - \eta_y (w_f^{k+1} + \mu_y y_f^{k+1})$
		\EndFor
		\State {\bf output:} $(-\mu_x^{-1}z^K, y^K)$
	\end{algorithmic}
\end{algorithm}

\subsection{Construction of the Algorithm}

As mentioned in \Cref{sec:pp}, \Cref{alg:pp} cannot be applied to solving problem~\eqref{eq:main} in its current form because it requires finding the vectors satisfying condition~\eqref{pp:monteiro-svaiter} on \cref{pp:line:aux} at each iteration. Further, we will show how to do this using \Cref{alg:eag}. Let $\R^d = \sX \times \sY$. For each $k\in\{0,1,2,\ldots\}$ consider operators $A^k(u)\colon \R^d \rightarrow \R^d$ and $B(u)\colon \R^d \rightrightarrows \R^d$ defined as follows:
\begin{equation}\label{eq:AB}
	\begin{split}
		A^k(u) = \vect{\sqrt{\gamma_x} a_x^k(x,y)\\
			 \sqrt{\gamma_y}a_y^k(x,y)}, \quad
		B(u) =  \left\{\vect{\sqrt{\gamma_x}b_x\\\sqrt{\gamma_y}b_y}\; \middle\vert \; b_x \in \partial r(x), b_y \in \partial g(y) \right\},
	\end{split}
\end{equation}
where $\gamma_x,\gamma_y >0$ are parameters, variable $u \in \R^d$ is defined as 
\begin{equation}
	u=(\gamma_x^{-1/2}x, \gamma_y^{-1/2}y), \quad \text{where} \quad (x,y) \in \sX \times \sY,
\end{equation}
and operators $a_x^k(x,y)\colon \R^d \rightarrow \sX$ and $a_y^k(x,y) \colon \R^d \rightarrow \sY$ are defined as
\begin{equation}
	\begin{split}
		a_x^k(x,y) &= \nabla_x \F(x,y) + \frac{\mu_x}{2}(x- \mu_x^{-1}z_g^k),\\
		a_y^k(x,y) &= -\nabla_y \F(x,y) + \mu_y y + \theta_y^{-1}(y - y_g^k).
	\end{split}
\end{equation}
One can observe that operators $A^k(u)$ and $B(u)$ satisfy \Cref{ass:A,ass:B}. This is justified by the following lemma.
\begin{lemma}\label{lem:M}
	Operator $A^k(u)$, defined by \eqref{eq:AB}, is monotone and $M$-Lipschitz, where $M$ is given as
	\begin{equation}\label{foam:M}
		M = 2\max\{\gamma_x L,\gamma_y(L+\theta_y^{-1})\}.
	\end{equation}
	Operator $B(u)$, defined by \eqref{eq:AB}, is maximally monotone.
\end{lemma}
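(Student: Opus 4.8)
The plan is to verify the three claimed properties of $A^k$ and $B$ directly from their definitions in \eqref{eq:AB}.

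\textbf{Monotonicity of $A^k$.} First I would compute the inner product $\<A^k(u_1) - A^k(u_2), u_1 - u_2>$ for $u_i = (\gamma_x^{-1/2}x_i, \gamma_y^{-1/2}y_i)$. Because of the scaling, the $\gamma_x, \gamma_y$ factors cancel: $\<A^k(u_1) - A^k(u_2), u_1 - u_2> = \<a_x^k(x_1,y_1) - a_x^k(x_2,y_2), x_1 - x_2> + \<a_y^k(x_1,y_1) - a_y^k(x_2,y_2), y_1 - y_2>$. Expanding $a_x^k$ and $a_y^k$, the quadratic terms $\frac{\mu_x}{2}\sqn{x_1 - x_2}$, $\mu_y\sqn{y_1-y_2}$, $\theta_y^{-1}\sqn{y_1 - y_2}$ are all nonnegative, while the terms involving $\nabla_x\F, \nabla_y\F$ combine to $\<\nabla_x\F(x_1,y_1) - \nabla_x\F(x_2,y_2), x_1-x_2> - \<\nabla_y\F(x_1,y_1) - \nabla_y\F(x_2,y_2), y_1-y_2>$, which is exactly the monotonicity expression for the saddle-point operator of $\F$; since $\F$ is convex in $x$ and concave in $y$ (as noted after \eqref{eq:Fhat}), this is nonnegative. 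Hence the whole sum is nonnegative.

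\textbf{Lipschitzness of $A^k$.} Next I would bound $\sqn{A^k(u_1) - A^k(u_2)}$. Using $\sqn{A^k(u_1) - A^k(u_2)} = \gamma_x\sqn{a_x^k(x_1,y_1) - a_x^k(x_2,y_2)} + \gamma_y\sqn{a_y^k(x_1,y_1) - a_y^k(x_2,y_2)}$ and the triangle inequality on each coordinate: $\norm{a_x^k(x_1,y_1) - a_x^k(x_2,y_2)} \le \norm{\nabla_x\F(x_1,y_1) - \nabla_x\F(x_2,y_2)} + \frac{\mu_x}{2}\norm{x_1 - x_2}$, and similarly for $a_y^k$ with constant $(L + \theta_y^{-1})$ after combining the $\nabla_y\F$, $\mu_y y$, and $\theta_y^{-1}y$ terms (note $\mu_y \le L$). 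Here I use that $\F$ is $L$-smooth, which follows from $F$ being $L$-smooth and the quadratic perturbation in \eqref{eq:Fhat}; more precisely $\norm{\nabla\F(x_1,y_1) - \nabla\F(x_2,y_2)} \le (L+\max\{\mu_x,\mu_y\})\sqrt{\sqn{x_1-x_2}+\sqn{y_1-y_2}} \le 2L\sqrt{\cdots}$, but it is cleaner to bound the $x$- and $y$-gradient blocks of $\nabla_x\F, \nabla_y\F$ each by $L\sqrt{\sqn{x_1-x_2}+\sqn{y_1-y_2}}$ using $\sqn{\nabla F(x_1,y_1)-\nabla F(x_2,y_2)} \le L^2(\sqn{x_1-x_2}+\sqn{y_1-y_2})$ together with $\mu_x,\mu_y \le L$, which absorbs the quadratic corrections into the factor $2$. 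Then $\gamma_x\sqn{a_x^k(\cdots)} \le 4\gamma_x^2 L^2(\sqn{x_1-x_2}/\gamma_x + \cdots)$ type manipulation; converting back to $\sqn{u_1-u_2} = \gamma_x^{-1}\sqn{x_1-x_2}+\gamma_y^{-1}\sqn{y_1-y_2}$ gives the coefficient $M = 2\max\{\gamma_x L, \gamma_y(L+\theta_y^{-1})\}$ after taking the worst block. This bookkeeping of the $\gamma$-rescaling is the step most prone to off-by-a-constant errors, and I expect it to be the main obstacle — one must be careful that the $\sqrt{\gamma}$ factors inside $A^k$ and the $\gamma^{-1/2}$ factors inside $u$ interact to produce exactly \eqref{foam:M} rather than some other constant.

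\textbf{Maximal monotonicity of $B$.} Finally, $B$ is the subdifferential of the separable convex function $(x,y)\mapsto r(\sqrt{\gamma_x}\,x') + g(\sqrt{\gamma_y}\,y')$ under the change of variables, or more directly: monotonicity follows coordinatewise since $\partial r$ and $\partial g$ are monotone and the $\gamma$-scalings are positive; maximality follows because $r$ and $g$ are proper lsc convex (the standing assumption), so $\partial r, \partial g$ are maximally monotone by \citep[Theorem 12.17]{rockafellar2009variational}, the product of maximally monotone operators on $\sX\times\sY$ is maximally monotone, and the invertible linear rescaling $u \leftrightarrow (x,y)$ preserves maximal monotonicity. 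I would state this as the short closing argument, citing the relevant results already referenced in the paper.
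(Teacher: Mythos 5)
Your overall structure matches the paper: monotonicity of $A^k$ is verified directly (the paper just says this is trivial, so your spelled-out saddle-point-operator argument is a useful elaboration), maximal monotonicity of $B$ is obtained exactly as the paper does — as the subdifferential of the separable convex function $(\gamma_x^{-1/2}x,\gamma_y^{-1/2}y)\mapsto r(x)+g(y)$, citing Rockafellar Theorem~12.17 — and the Lipschitz bound is obtained by expanding $\sqn{A^k(u_1)-A^k(u_2)}=\gamma_x\sqn{a_x^k(x_1,y_1)-a_x^k(x_2,y_2)}+\gamma_y\sqn{a_y^k(\cdot)}$ and tracking the $\gamma$-rescaling.

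However, the specific shortcut you propose for the Lipschitz step does not produce the stated constant $M=2\max\{\gamma_x L,\gamma_y(L+\theta_y^{-1})\}$, and can in fact fail for general $\gamma_x,\gamma_y$. You bound $\norm{a_x^k(x_1,y_1)-a_x^k(x_2,y_2)}$ by a triangle inequality into a $\nabla_x\F$ piece plus a quadratic piece, then bound $\nabla_x\F$ jointly by $L\sqrt{\sqn{x_1-x_2}+\sqn{y_1-y_2}}$ "absorbing the $\mu_x$ correction." But $\nabla_x\F=\nabla_x F-\mu_x x$, so the naive triangle inequality gives $(L+\mu_x)\sqrt{\cdots}$, not $L\sqrt{\cdots}$; more importantly, this joint bound produces cross terms of the form $\gamma_x\gamma_y(L+\theta_y^{-1})^2$ when you reweight into $\sqn{u_1-u_2}$, and such a cross term is \emph{not} dominated by $4\max\{\gamma_x^2L^2,\gamma_y^2(L+\theta_y^{-1})^2\}$ when, say, $\gamma_x\gg\gamma_y$ and $\theta_y^{-1}\gg L$. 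The paper avoids both problems by first applying $\sqn{a+b}\le2\sqn{a}+2\sqn{b}$ to split $a_x^k(x_1,y_1)-a_x^k(x_2,y_2)$ into a pure-$y$ increment and a pure-$x$ increment: the pure-$y$ increment is $\nabla_x F(x_1,y_1)-\nabla_x F(x_1,y_2)$ (the $\mu_x x$ terms cancel identically), giving $L^2\sqn{y_1-y_2}$; the pure-$x$ increment is $\nabla_x F(x_1,y_2)-\nabla_x F(x_2,y_2)-\frac{\mu_x}{2}(x_1-x_2)$, which is $L$-Lipschitz because the Hessian $\nabla_{xx}^2 F-\frac{\mu_x}{2}I$ has spectrum in $[\mu_x/2,\,L-\mu_x/2]\subset[0,L]$ — a cancellation the triangle inequality cannot see. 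With this coordinate-by-coordinate split, every cross term comes out as $\gamma_x\gamma_y L^2$, which \emph{is} dominated by $\max\{\gamma_x^2L^2,\gamma_y^2(L+\theta_y^{-1})^2\}$, and the factor of $2$ from the $\sqn{a+b}$ split, combined with the sum of two such max-bounded terms, gives exactly $M^2=4\max\{\cdot\}$. You correctly flag the $\gamma$-bookkeeping as the danger zone; the missing idea is that you must increment one variable at a time so that the quadratic corrections cancel rather than accumulate.
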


Now, we are ready to construct the first optimal algorithm for solving main problem~\eqref{eq:main} which is formalized as \Cref{alg:foam}.
In order to do this, we use \Cref{alg:eag} to perform the computations on \cref{pp:line:aux} of \Cref{alg:pp}. Consider the $k$-th iteration of \Cref{alg:pp} and replace \cref{pp:line:aux} of \Cref{alg:pp} with the lines of \Cref{alg:eag} using the notation $u^t = (\gamma_x^{-1/2}x^{k,t}, \gamma_y^{-1/2}y^{k,t})$ for $t \in \{-1,0,1,2,\ldots\}$. 

In addition, we replace the for-loop of \Cref{alg:eag} with the while-loop that iterates until the following condition is satisfied (see \cref{foam:line:while} of \Cref{alg:foam}):
\begin{equation}\label{foam:while}
	\begin{split}
			\gamma_x\sqn{a_x^k(x^{k,t}, y^{k,t}) + b_x^{k,t}} + \gamma_y\sqn{a_y^k(x^{k,t}, y^{k,t}) + b_y^{k,t}}\leq\qquad\qquad
			\\\leq \gamma_x^{-1}\sqn{x^{k,t} - x^{k,-1}} + \gamma_y^{-1}\sqn{y^{k,t} - y^{k,-1}}.
	\end{split}
\end{equation}
We also set the initial iterates to $x^{k,-1} = -\mu_x^{-1} z_g^k$ and $ y^{k,-1} = y_g^k$ on \cref{foam:line:xyi} of \Cref{alg:foam}, and 
use the output of the inner while-loop to compute vectors $(x_f^{k+1},y_f^{k+1},z_f^{k+1},w_f^{k+1})$
% in the following way: $x_f^{k+1} = x^{k,t^k}$, $y_f^{k+1} = y^{k,t^k}$, $z_f^{k+1} = \nabla_x \F(x_f^{k+1}, y_f^{k+1}) + b_x^{k,t^k}$, and $w_f^{k+1} = -\nabla_y \F(x_f^{k+1}, y_f^{k+1}) + b_y^{k,t^k}$, where $t^k$ is the number of iterations performed by the inner while-loop at iteration $k$ of \Cref{alg:foam} (see \cref{foam:line:t} of \Cref{alg:foam}).
on \cref{foam:line:xyf,foam:line:zwf} of \Cref{alg:foam}.
Now, if we define parameters $\gamma_x,\gamma_y$ in the following way:
\begin{equation}\label{foam:gamma}
	\gamma_x = 8\mu_x^{-1},\qquad \gamma_y = \theta_y,
\end{equation}
then condition~\eqref{foam:while} on \cref{foam:line:while} of \Cref{alg:foam} becomes equivalent to condition~\eqref{pp:monteiro-svaiter} on \cref{pp:line:aux} of \Cref{alg:pp}. 
Hence, vectors $(x_f^{k+1},y_f^{k+1},z_f^{k+1},w_f^{k+1})$ computed on \cref{foam:line:xyf,foam:line:zwf} of \Cref{alg:foam} satisfy condition~\eqref{pp:monteiro-svaiter}, which implies that \Cref{alg:foam} is a special case of \Cref{alg:pp}.

\subsection{Complexity of the Algorithm}

It remains to establish the gradient evaluation complexity of \Cref{alg:foam}.
First, we need to estimate the number of iterations performed by the inner while-loop of \Cref{alg:foam}, which is equal to $t^k$ defined on \cref{foam:line:t} of \Cref{alg:foam}. Recall that the inner while-loop was constructed out of the lines of \Cref{alg:eag}. Hence, we can use \Cref{eag:thm} to provide an upper bound on $t^k$. This is done by the following lemma.
\begin{lemma}\label{lem:T}
	Assume the following choice of the parameters of \Cref{alg:foam}: stepsize $\lambda$ is defined by \eqref{eag:lambda}, parameter $M$ is defined by~\eqref{foam:M}, sequence $\{\beta_t\}_{t=0}^{\infty}$ is defined by~\eqref{eag:beta}, parameters $\gamma_x$ and $\gamma_y$ are defined by~\eqref{foam:gamma}. Then,  $t^k \leq T$, where $T$ is given as
	\begin{equation}\label{foam:T}
		T= \lceil 48\sqrt{2} \max\{8 L/\mu_x, 1 + \theta_y L\} \rceil -1.
	\end{equation}
\end{lemma}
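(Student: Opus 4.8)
The plan is to identify the inner while-loop of \Cref{alg:foam} with \Cref{alg:eag} applied to a concrete monotone inclusion, use \Cref{eag:thm} to control the operator residual, translate the stopping condition \eqref{foam:while} into residual form, and check that it must fire within $T$ steps. Concretely, fix the outer index $k$ and set $u^t=(\gamma_x^{-1/2}x^{k,t},\gamma_y^{-1/2}y^{k,t})$ for $t\in\{-1,0,1,\dots\}$. Using the resolvent/prox identity $u^+=\prox_{\gamma\lambda h}(v)\Leftrightarrow(\gamma\lambda)^{-1}(v-u^+)\in\partial h(u^+)$ together with the operators $A^k,B$ from \eqref{eq:AB}, the lines inside the while-loop are exactly \Cref{alg:eag} run on $0\in A^k(u)+B(u)$ from $u^{-1}$, with $\lambda$ as in \eqref{eag:lambda} and $\beta_t$ as in \eqref{eag:beta}. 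By \Cref{lem:M}, $A^k$ is monotone and $M$-Lipschitz with $M$ as in \eqref{foam:M}, and $B$ is maximally monotone; moreover the inclusion has a solution $u^{*,k}$, namely the image of the unique minimizer of the strongly convex problem \eqref{eq:aux} under $(x,y)\mapsto(\gamma_x^{-1/2}x,\gamma_y^{-1/2}y)$. Hence \Cref{eag:thm} yields, for every $t\ge 0$,
\begin{equation*}
	\sqn{a^t+b^t}\le\frac{288M^2}{(t+1)^2}\sqn{u^{-1}-u^{*,k}}.
\end{equation*}

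Next I translate the stopping condition. Because the coordinates carry factors $\gamma_x^{-1/2},\gamma_y^{-1/2}$ while $A^k$ carries $\sqrt{\gamma_x},\sqrt{\gamma_y}$, the two sides of \eqref{foam:while} at index $t$ are exactly $\sqn{a^t+b^t}$ and $\sqn{u^t-u^{-1}}$, so $t^k$ is the first $t\ge 0$ with $\sqn{a^t+b^t}\le\sqn{u^t-u^{-1}}$. Substituting $\gamma_x=8/\mu_x$, $\gamma_y=\theta_y$ into \eqref{foam:M} gives $M=2\max\{8L/\mu_x,\,1+\theta_y L\}$, hence the claimed $T$ satisfies $T+1=\lceil 24\sqrt2\,M\rceil\ge 24\sqrt2\,M$, so $(T+1)^2\ge 1152M^2$ and $\tfrac{288M^2}{(T+1)^2}\le\tfrac14$.

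It remains to show the condition holds at $t=T$. If $u^{-1}=u^{*,k}$ then all iterates of \Cref{alg:eag} equal $u^{*,k}$, the residual vanishes, and \eqref{foam:while} already holds at $t=0$. Otherwise set $D=\norm{u^{-1}-u^{*,k}}>0$; the residual bound at $t=T$ gives $\sqn{a^T+b^T}\le\tfrac14 D^2$, so it suffices to prove $\sqn{u^T-u^{-1}}\ge\tfrac14 D^2$, i.e.\ $\norm{u^T-u^{-1}}\ge\tfrac12 D$. By the triangle inequality this follows once $\norm{u^T-u^{*,k}}\le\tfrac12 D$ — a trajectory bound for \Cref{alg:eag}, expressing that the Halpern-type anchoring keeps the iterates inside a controlled ball around the solution; combined with the quadratic decay of $\sqn{a^t+b^t}$ and the calibration $T+1\ge 24\sqrt2\,M$ this produces the factor $\tfrac12$. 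Granting it, \eqref{foam:while} holds at $t=T$, so $t^k\le T$, and since $T$ depends only on $L,\mu_x,\theta_y$ (not on $k$) the lemma follows.

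The main obstacle is exactly this distance estimate $\norm{u^T-u^{*,k}}\le\tfrac12\norm{u^{-1}-u^{*,k}}$: \Cref{eag:thm} as stated controls only $\norm{a^T+b^T}$, and for a merely monotone (not strongly monotone) operator a small residual need not force a small distance to the solution. One obtains it by propagating a bound on $\norm{u^t-u^{*,k}}$ through the Lyapunov/anchoring argument underlying \Cref{eag:thm} (alternatively, by comparing over a window of late indices near $T$ and forcing $u^{*,k}$ into the shrinking ball around $u^{-1}$ that stalled iterates would otherwise have to occupy). Everything else — the change of variables, \Cref{lem:M}, the arithmetic $288/1152=\tfrac14$, and the ceiling bookkeeping — is routine.
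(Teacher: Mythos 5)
There is a genuine gap, and you have actually mislocated what the missing ingredient is. Your proof reduces the claim to the trajectory bound $\norm{u^T - u^{*,k}} \le \tfrac12\norm{u^{-1}-u^{*,k}}$, which you only assert and sketch ("propagating a bound through the Lyapunov/anchoring argument") rather than prove. More importantly, your remark that the obstacle is that ``for a merely monotone (not strongly monotone) operator a small residual need not force a small distance to the solution'' misreads the situation: the operator $A^k$ defined in \eqref{eq:AB} is \emph{not} merely monotone, it is strongly monotone, because $a_x^k$ contains the extra term $\tfrac{\mu_x}{2}(x-\mu_x^{-1}z_g^k)$ and $a_y^k$ contains $\mu_y y + \theta_y^{-1}(y-y_g^k)$ — these are exactly the Moreau--Yosida regularization terms that make the auxiliary problem \eqref{eq:aux} strongly convex. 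So here a small residual \emph{does} force a small distance to $u^{*,k}$.

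The paper's proof exploits precisely this and avoids any trajectory bound. Strong monotonicity of $A^k$ gives, by Cauchy--Schwarz,
\begin{equation*}
\gamma_x^{-1}\sqn{x^{k,t}-x^{k,*}} + \gamma_y^{-1}\sqn{y^{k,t}-y^{k,*}} \le \gamma_x\sqn{a_x^k(x^{k,t},y^{k,t})+b_x^{k,t}} + \gamma_y\sqn{a_y^k(x^{k,t},y^{k,t})+b_y^{k,t}},
\end{equation*}
i.e.\ $\sqn{u^t-u^{*,k}} \le \sqn{a^t+b^t}$. Then the triangle inequality $\sqn{u^{-1}-u^{*,k}} \le 2\sqn{u^{-1}-u^t} + 2\sqn{u^t-u^{*,k}}$ is substituted into the \Cref{eag:thm} bound $\sqn{a^t+b^t}\le \tfrac{288M^2}{(t+1)^2}\sqn{u^{-1}-u^{*,k}}$, which after using the previous display and choosing $T$ so that $\tfrac{576M^2}{(T+1)^2}\le\tfrac12$ lets one absorb the residual term into the left-hand side and obtain exactly the stopping inequality \eqref{foam:while}. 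Your change of variables, use of \Cref{lem:M}, and the ceiling arithmetic all match the paper; the missing step is to use the strong monotonicity you discounted rather than a Halpern trajectory estimate you did not supply.
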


Now, we are ready to provide the final gradient complexity of \Cref{alg:foam}. It is done by the following theorem.

\begin{theorem}\label{foam:thm}
	
	Let parameters of \Cref{alg:foam} be defined as follows: $\alpha =\min\left\{1, \sqrt{\theta_y\mu_y}\right\}$, $\theta_y = 8\mu_x^{-1}$, 	$\lambda = \left(2\sqrt{5}(1 + 8L/\mu_x)\right)^{-1}$, stepsizes $\eta_z$ and $\eta_y$ are defined by \eqref{pp:eta}, parameters  $\gamma_x$ and $\gamma_y$ are defined by \eqref{foam:gamma}, parameters $\{\beta_t\}_{t=0}^\infty$ are defined by \eqref{eag:beta}. Then, to find an $\epsilon$-accurate solution of problem~\eqref{eq:main}, \Cref{alg:foam} requires the following number of gradient evaluations:
	\begin{equation}
		\cO\left(\max\left\{\frac{L}{\mu_x}, \frac{L}{\sqrt{\mu_x\mu_y}}\right\}\log \frac{1}{\epsilon}\right).
	\end{equation}
	
%	Let parameter $\alpha$ be defined as follows:
%	\begin{equation}
%		\alpha = \min\left\{1, \sqrt{\theta_y\mu_y}\right\}.
%	\end{equation}
%	Let stepsize $\theta_y$ be defined as follows:
%	\begin{equation}
%		\theta_y = 8\mu_x^{-1}.
%	\end{equation}
%	Let stepsizes $\eta_z,\eta_y$ be defined by \eqref{pp:eta}. Let stepsize $\lambda$ be defined as follows:
%	\begin{equation}
%		\lambda = [2\sqrt{5}(1 + 8L/\mu_x)]^{-1}.
%	\end{equation}
%	Let parameters $\gamma_x,\gamma_y$ be defined by \eqref{foam:gamma} and let parameters $\{\beta_t\}_{t=0}^\infty$ be defined by \eqref{eag:beta}.
\end{theorem}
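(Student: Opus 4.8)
The plan is to combine the three building blocks assembled in the paper: \Cref{pp:thm} (the outer iteration count for \Cref{alg:pp} in terms of the free parameters $\alpha$ and $\theta_y$), the fact (established just before this theorem) that \Cref{alg:foam} is a valid instance of \Cref{alg:pp} once $\gamma_x = 8\mu_x^{-1}$ and $\gamma_y = \theta_y$, and \Cref{lem:T} (the uniform bound $t^k \le T$ on the number of inner iterations, i.e. the per-iteration gradient cost). The total gradient complexity is then, up to constants, $K \cdot T$, and the theorem follows by substituting the prescribed parameter values and simplifying.

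Concretely, I would proceed as follows. First, plug $\theta_y = 8\mu_x^{-1}$ into the choice $\alpha = \min\{1,\sqrt{\theta_y\mu_y}\}$, so $\alpha = \min\{1, \sqrt{8\mu_y/\mu_x}\}$. Second, evaluate the outer count from \Cref{pp:thm}: $K = \cO\left(\max\left\{\tfrac1\alpha, \tfrac{\alpha}{\theta_y\mu_y}\right\}\log\tfrac1\epsilon\right)$. Here one checks the two regimes: if $\theta_y\mu_y = 8\mu_y/\mu_x \ge 1$ then $\alpha = 1$ and the max is $\cO(1)$, giving $K = \cO\!\left(\log\tfrac1\epsilon\right)$; if $8\mu_y/\mu_x < 1$ then $\alpha = \sqrt{8\mu_y/\mu_x}$ and both $\tfrac1\alpha$ and $\tfrac{\alpha}{\theta_y\mu_y}$ equal $\Theta\!\left(\sqrt{\mu_x/\mu_y}\right)$, so $K = \cO\!\left(\sqrt{\mu_x/\mu_y}\,\log\tfrac1\epsilon\right)$. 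In both cases $K = \cO\!\left(\max\{1,\sqrt{\mu_x/\mu_y}\}\log\tfrac1\epsilon\right)$. Third, evaluate $T$ from \Cref{lem:T}: with $\theta_y = 8\mu_x^{-1}$ one has $1 + \theta_y L = 1 + 8L/\mu_x = \Theta(L/\mu_x)$ (since $L \ge \mu_x$), and $8L/\mu_x = \Theta(L/\mu_x)$, so $T = \cO(L/\mu_x) = \cO(\kappa_x)$. Fourth, I would also confirm that the stated $\lambda$ is exactly the value \eqref{eag:lambda} requires: $\lambda = 1/(\sqrt5 M)$ with $M = 2\max\{\gamma_x L, \gamma_y(L+\theta_y^{-1})\} = 2\max\{8L/\mu_x, \theta_y L + 1\}$ by \Cref{lem:M} and \eqref{foam:gamma}; since $8L/\mu_x = \theta_y L \ge \theta_y L + 1$ up to the rounding in the paper's display, this matches $\lambda = (2\sqrt5(1+8L/\mu_x))^{-1}$, so the hypotheses of \Cref{lem:T} and \Cref{eag:thm} hold.

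Finally, multiply: the number of gradient evaluations is $\cO(K\cdot T) = \cO\!\left(\max\{1,\sqrt{\mu_x/\mu_y}\}\cdot \tfrac{L}{\mu_x}\cdot\log\tfrac1\epsilon\right) = \cO\!\left(\max\left\{\tfrac{L}{\mu_x}, \tfrac{L}{\sqrt{\mu_x\mu_y}}\right\}\log\tfrac1\epsilon\right)$, which is the claimed bound. (One should note each inner iteration uses $\cO(1)$ evaluations of $\nabla\F$ and of the proximal operators of $r,g$, so "iterations of the inner loop" and "gradient evaluations" agree up to a constant, and likewise each outer iteration contributes $\cO(1)$ extra gradient-free updates.)

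The main obstacle is not any single deep estimate — all the hard analytic work is already encapsulated in \Cref{pp:thm}, \Cref{eag:thm}, \Cref{lem:M}, and \Cref{lem:T} — but rather the bookkeeping needed to verify rigorously that \Cref{alg:foam} really is the instance of \Cref{alg:pp} obtained by running \Cref{alg:eag} on the inclusion \eqref{eq:inclusion} with operators \eqref{eq:AB}, so that the while-loop exit condition \eqref{foam:while} coincides with the Monteiro--Svaiter-type condition \eqref{pp:monteiro-svaiter} under the substitution \eqref{foam:gamma}. In particular, the delicate points are: (a) checking that the change of variables $u = (\gamma_x^{-1/2}x,\gamma_y^{-1/2}y)$ turns the $\gamma$-weighted norms in \eqref{foam:while} into the plain norms $\sqn{a^t+b^t}$ and $\sqn{u^t - u^{-1}}$ to which \Cref{eag:thm} applies; (b) confirming that $u^{-1} = (\gamma_x^{-1/2}x^{k,-1},\gamma_y^{-1/2}y^{k,-1})$ with $x^{k,-1} = -\mu_x^{-1}z_g^k$, $y^{k,-1} = y_g^k$ is the correct anchor so the right-hand side of \eqref{pp:monteiro-svaiter} appears; and (c) tracking the $8$'s and $\mu_x^{-1}$'s through \eqref{foam:gamma} so that the factor $\tfrac{8}{\mu_x}$ in \eqref{pp:monteiro-svaiter} lands exactly where \eqref{foam:while} puts $\gamma_x^{-1} = \tfrac{\mu_x}{8}$. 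Once that identification is clean, the rest is the elementary two-regime arithmetic sketched above.
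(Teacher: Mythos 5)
Your proof is correct and takes essentially the same route as the paper's: invoke \Cref{pp:thm} for the outer iteration count $K$ and \Cref{lem:T} for the per-iteration inner cost $T$, substitute the prescribed $\alpha$, $\theta_y$, $\gamma_x$, $\gamma_y$, multiply $K\cdot T$, and observe each inner iteration uses $\cO(1)$ gradient evaluations. The only slip is cosmetic: in verifying $\lambda$ you wrote $\theta_y L \ge \theta_y L + 1$, which is reversed — the max in \eqref{foam:M} is $\theta_y L + 1 = 1 + 8L/\mu_x$, so $M = 2(1+8L/\mu_x)$ and $\lambda = (2\sqrt{5}(1+8L/\mu_x))^{-1}$ holds exactly, with no rounding needed.
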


\begin{corollary}
	Without loss of generality we can assume $\mu_x \geq \mu_y$, otherwise we just swap variables $x$ and $y$ in problem~\eqref{eq:main}. Hence, \Cref{alg:foam} has the following gradient evaluation complexity:
	\begin{equation}
		\cO\left(\frac{L}{\sqrt{\mu_x\mu_y}}\log \frac{1}{\epsilon}\right).
%			\cO\left( \sqrt{\kappa_x\kappa_y} \log \epsilon^{-1}\right).
	\end{equation}
\end{corollary}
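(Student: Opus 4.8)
The plan is simply to observe that the stated complexity of \Cref{alg:foam} in \Cref{foam:thm}, namely
$\cO\left(\max\left\{\frac{L}{\mu_x}, \frac{L}{\sqrt{\mu_x\mu_y}}\right\}\log \frac{1}{\epsilon}\right)$,
collapses to a single term once we fix an ordering of the strong-convexity parameters. First I would justify the ``without loss of generality'' reduction: problem~\eqref{eq:main} is symmetric under the substitution that swaps the roles of the two blocks — relabelling $x \leftrightarrow y$, $r \leftrightarrow g$, and replacing $F(x,y)$ by $-F(y,x)$ — which turns a $\mu_x$-strongly-convex-in-$x$, $\mu_y$-strongly-concave-in-$y$ instance into a $\mu_y$-strongly-convex, $\mu_x$-strongly-concave one with the same smoothness constant $L$, and carries an $\epsilon$-accurate solution to an $\epsilon$-accurate solution in the sense of \Cref{def:accuracy}. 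Hence applying \Cref{alg:foam} to the swapped instance whenever $\mu_x < \mu_y$ lets us assume $\mu_x \geq \mu_y$ throughout, at no cost in the gradient complexity.

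Next, under the assumption $\mu_x \geq \mu_y$, I would compare the two quantities inside the maximum. We have $\frac{L}{\mu_x} \leq \frac{L}{\sqrt{\mu_x\mu_y}}$ precisely when $\sqrt{\mu_x\mu_y} \leq \mu_x$, i.e. when $\mu_y \leq \mu_x$, which holds by assumption. Therefore $\max\left\{\frac{L}{\mu_x}, \frac{L}{\sqrt{\mu_x\mu_y}}\right\} = \frac{L}{\sqrt{\mu_x\mu_y}}$, and substituting this into the bound of \Cref{foam:thm} yields the claimed gradient evaluation complexity $\cO\left(\frac{L}{\sqrt{\mu_x\mu_y}}\log \frac{1}{\epsilon}\right)$.

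There is essentially no obstacle here; the only point requiring a sentence of care is the symmetry argument, to confirm that swapping $x$ and $y$ genuinely preserves all the standing assumptions (\Cref{ass:convexity,ass:concavity,ass:smoothness} and the regularizer assumption) and the accuracy notion, so that the complexity guarantee of \Cref{foam:thm} transfers verbatim to the original instance. Once that is in place, the corollary is immediate from the elementary inequality $\sqrt{\mu_x\mu_y} \le \mu_x$.
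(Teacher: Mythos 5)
Your proposal is correct and follows essentially the same route as the paper: the paper's corollary likewise reduces to the case $\mu_x \geq \mu_y$ by swapping the roles of $x$ and $y$ (which preserves \Cref{ass:convexity,ass:concavity,ass:smoothness} and the accuracy criterion), after which the maximum in \Cref{foam:thm} collapses to $\frac{L}{\sqrt{\mu_x\mu_y}}$ because $\sqrt{\mu_x\mu_y} \leq \mu_x$. Your explicit check that the swap $F(x,y) \mapsto -F(y,x)$, $r \leftrightarrow g$ preserves all standing assumptions is a welcome elaboration of the paper's one-line "without loss of generality" remark, but it is the same argument.
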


%\begin{ack}
%Use unnumbered first level headings for the acknowledgments. All acknowledgments
%go at the end of the paper before the list of references. Moreover, you are required to declare
%funding (financial activities supporting the submitted work) and competing interests (related financial activities outside the submitted work).
%More information about this disclosure can be found at: \url{https://neurips.cc/Conferences/2022/PaperInformation/FundingDisclosure}.
%
%
%Do {\bf not} include this section in the anonymized submission, only in the final paper. You can use the \texttt{ack} environment provided in the style file to autmoatically hide this section in the anonymized submission.
%\end{ack}

\newpage

\bibliographystyle{apalike}
\bibliography{reference.bib}

\newpage

\appendix

\part*{Appendix}

\section{Proof of \Cref{lem:subdifferential}}

From the definition of $G(z,y)$ it follows that
\begin{equation}\label{eq:2}
	G(z,y) \geq \<z,x> - r(x) - \F(x,y) + g(y).
\end{equation}
Using $z - \nabla_x \F(x,y) \in \partial r(x)$, for arbitrary $\bar x \in \sX$ we get 
\begin{align*}
	r(\bar x) &\geq r(x) + \<z - \nabla_x \F(x,y), \bar x - x>
	\\&\geq
	r(x) + \<z,\bar x - x> + \F(x,y) - \F(\bar x, y),
\end{align*}
where we used the convexity of $\F(x,y)$ in $x$ in the last inequality.
After rearranging we get
\begin{equation*}
	\<z,x> - r(x) - \F(x,y) \geq \<z,\bar x> - r(\bar x) - \F(\bar x, y).
\end{equation*}
Now, we add $g(y)$ to both sides of the inequality and take supremum over $\bar x \in \sX$. This gives us
\begin{equation*}
	\<z,x> - r(x) - \F(x,y) + g(y) \geq \sup_{\bar x \in \sX}\left[\<z,\bar x> - r(\bar x) - \F(\bar x, y) + g(y)\right] = G(z,y),
\end{equation*}
which together with~\eqref{eq:2} implies $G(z,y) = \<z,x> - r(x) - \F(x,y) + g(y)$.

Next, we use $w + \nabla_y \F(x,y) \in \partial g(y)$, which for arbitrary $\bar y \in \sY$ implies
\begin{align*}
	g(\bar y) &\geq g(y) + \<w + \nabla_y \F(x,y), \bar y - y>
	\\&\geq g(y) + \<w, \bar y - y> + \F(x,\bar y) - \F(x,y),
\end{align*}
where we used the concavity of $\F(x,y)$ in $y$ in the last inequality. After rearranging we get
\begin{equation*}
	g(\bar y) - \F(x,\bar y)\geq g(y) - \F(x,y) +\<w, \bar y - y>.
\end{equation*}
Now, we choose arbitrary $\bar z \in \sX$ and add $\<x, \bar z> - r(x)$ to both sides of the inequality, which implies
\begin{align*}		
	g(\bar y) - \F(x,\bar y) + \<x, \bar z> - r(x)& \geq g(y) - \F(x,y) +\<w, \bar y - y> + \<x, \bar z> - r(x) \\&= 
	G(z,y) + \<x, \bar z - z> + \<w, \bar y - y>,
\end{align*}
where we used $G(z,y) = \<z,x> - r(x) - \F(x,y) + g(y)$. Further, using \eqref{eq:2} we get
\begin{equation*}
	G(z,y) + \<x, \bar z - z> + \<w, \bar y - y> \leq G(\bar z, \bar y),
\end{equation*}
which holds for arbitrary $\bar z \in \sX, \bar y \in \sY$. Hence, $(x,w) \in \partial G(z,y)$ by the definition of the subdifferential of a convex function. \qed

\section{Proof of \Cref{lem:solution}}

From the optimality conditions \eqref{eq:opt} and the definition of $\F(x,y)$, it follows that
\begin{align*}
	- \mu_x x^* -\nabla_x \F(x^*,y^*)  & \in \partial r(x^*),\\
	- \mu_y y^* + \nabla_y \F(x^*,y^*)  & \in \partial g(y^*).
\end{align*}
Using~\eqref{eq:zstar} we get
\begin{align*}
	z^* -\nabla_x \F(x^*,y^*)  & \in \partial r(x^*),\\
	- \mu_y y^* + \nabla_y \F(x^*,y^*)  & \in \partial g(y^*).
\end{align*}
Using \Cref{lem:subdifferential} we get 
\begin{equation*}
	(x^*, - \mu_y y^*) \in \partial G(z^*,y^*),
\end{equation*}
which together with \eqref{eq:zstar} implies
\begin{equation*}
	(-\mu_x^{-1} z^*, - \mu_y y^*) \in \partial G(z^*,y^*).
\end{equation*}
The latter condition implies $0 \in \partial P(z^*,y^*)$. Hence, $(z^*,y^*)$ is indeed a solution of problem~\eqref{eq:min}. The uniqueness of this solution is implied by the strong convexity of the function $P(z,y)$.
\qed

\section{Proof of \Cref{pp:thm}}

We start with proving two technical lemmas.

\begin{lemma}\label{pp:lem:1}
	Under conditions of \Cref{pp:thm} the following inequality holds:
	\begin{equation}
		\begin{split}
			\frac{1}{\eta_z}\sqn{z^{k+1} - z^*}
			&\leq
			\left(\frac{1}{\eta_z} - \mu_x^{-1}\right)\sqn{z^k - z^*}
			+\mu_x^{-1}\sqn{z_f^{k+1} - z^*}
			\\&
			+\frac{2}{\alpha}\<x_f^{k+1} + \mu_x^{-1} z_f^{k+1}, (1-\alpha)z_f^k + \alpha z^*- z_f^{k+1}>
			\\&
			+\frac{1}{\alpha}
			\left(
			\frac{8}{\mu_x}\sqn{z_f^{k+1} + \frac{\mu_x}{2}(x_f^{k+1} - \mu_x^{-1} z_g^k)}
			-\frac{\mu_x}{8}\sqn{x_f^{k+1} +\mu_x^{-1} z_g^k}
			\right).
		\end{split}
	\end{equation}
\end{lemma}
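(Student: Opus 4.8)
The plan is to derive the inequality directly from the update rule for $z^{k+1}$ on \cref{pp:line:z} of \Cref{alg:pp}, expanding the squared distance $\sqn{z^{k+1}-z^*}$ and carefully re-grouping the terms. First I would write $z^{k+1} = z^k + \eta_z d^k$, where $d^k = \mu_x^{-1}(z_f^{k+1}-z^k) - (x_f^{k+1}+\mu_x^{-1}z_f^{k+1})$, so that
\[
	\tfrac{1}{\eta_z}\sqn{z^{k+1}-z^*} = \tfrac{1}{\eta_z}\sqn{z^k-z^*} + 2\<d^k, z^k - z^*> + \eta_z\sqn{d^k}.
\]
The two ``clean'' pieces of $d^k$, namely $\mu_x^{-1}(z_f^{k+1}-z^k)$, should be combined with $\tfrac{1}{\eta_z}\sqn{z^k-z^*}$ using the elementary identity $2\<a-b,b-c> = \sqn{a-c}-\sqn{a-b}-\sqn{b-c}$ (with $a=z_f^{k+1}$, $b=z^k$, $c=z^*$), which produces exactly the first line of the claimed bound, $(\tfrac{1}{\eta_z}-\mu_x^{-1})\sqn{z^k-z^*} + \mu_x^{-1}\sqn{z_f^{k+1}-z^*}$, plus a leftover term $-\mu_x^{-1}\sqn{z_f^{k+1}-z^k}$ that must be tracked and eventually absorbed.

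Next I would handle the remaining inner-product term $-2\<x_f^{k+1}+\mu_x^{-1}z_f^{k+1}, z^k - z^*>$ together with the quadratic term $\eta_z\sqn{d^k}$. The key algebraic move is to rewrite $z^k - z^*$ using the relation $(z_g^k,y_g^k) = \alpha(z^k,y^k)+(1-\alpha)(z_f^k,y_f^k)$ from \cref{pp:line:convex}, which gives $\alpha z^k = z_g^k - (1-\alpha)z_f^k$, hence $\alpha(z^k - z^*) = z_g^k - (1-\alpha)z_f^k - \alpha z^*$; adding and subtracting $z_f^{k+1}$ inside this expression turns the inner product into the term $\tfrac{2}{\alpha}\<x_f^{k+1}+\mu_x^{-1}z_f^{k+1}, (1-\alpha)z_f^k + \alpha z^* - z_f^{k+1}>$ appearing on the second line, plus an extra contribution involving $\<x_f^{k+1}+\mu_x^{-1}z_f^{k+1}, z_f^{k+1} - z_g^k>$. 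It is this extra contribution, together with $\eta_z\sqn{d^k}$, the leftover $-\mu_x^{-1}\sqn{z_f^{k+1}-z^k}$, and the choice $\eta_z = \mu_x/2$ from \eqref{pp:eta}, that I expect to recombine — after completing squares and recognizing $z_f^{k+1}+\tfrac{\mu_x}{2}(x_f^{k+1}-\mu_x^{-1}z_g^k)$ and $x_f^{k+1}+\mu_x^{-1}z_g^k$ as the natural quadratic forms — into the third line $\tfrac{1}{\alpha}\bigl(\tfrac{8}{\mu_x}\sqn{z_f^{k+1}+\tfrac{\mu_x}{2}(x_f^{k+1}-\mu_x^{-1}z_g^k)} - \tfrac{\mu_x}{8}\sqn{x_f^{k+1}+\mu_x^{-1}z_g^k}\bigr)$.

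The main obstacle will be the bookkeeping in this last recombination: there are several cross terms of the form $\<x_f^{k+1}+\mu_x^{-1}z_f^{k+1}, \cdot>$ and several stray squared norms ($\sqn{z_f^{k+1}-z^k}$, $\sqn{z_f^{k+1}-z_g^k}$, $\sqn{d^k}$), and one must choose the completion-of-squares weights so that everything collapses exactly, with the $\tfrac{1}{\alpha}$ factor emerging correctly and no residual positive terms left over. The fact that $\alpha \le 1$ (so that $1-\alpha \ge 0$) and the specific coefficient $\eta_z=\mu_x/2$ are what make the cancellation work, and I would verify at the end that the inequality is an \emph{inequality} (not an identity) precisely because some nonnegative leftover square is discarded. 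A useful sanity check along the way is to track units of $\mu_x$: every term in the claimed bound is homogeneous of the right degree, which constrains the admissible groupings and guides the completion of squares.
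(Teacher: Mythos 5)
Your plan is correct and matches the paper's own proof essentially step for step: expand $\tfrac{1}{\eta_z}\sqn{z^{k+1}-z^*}$ from the update rule, apply the polarization identity with $a=z_f^{k+1}$, $b=z^k$, $c=z^*$ to obtain the first line plus the leftover $-\mu_x^{-1}\sqn{z_f^{k+1}-z^k}$, substitute $z^k=\alpha^{-1}z_g^k-(1-\alpha)\alpha^{-1}z_f^k$ to peel off the second line, and recombine the remnants into the third. One small calibration on expectations, so the bookkeeping does not feel stuck at the end: the cancellation is not exact. After bounding $\eta_z\sqn{d^k}$ via $\sqn{a+b}\leq 2\sqn{a}+2\sqn{b}$ and plugging $\eta_z=\mu_x/2$, the stray $\mu_x^{-1}\sqn{z_f^{k+1}-z^k}$ cancels cleanly, but you are then left with a term $\mu_x(1-\alpha^{-1})\sqn{x_f^{k+1}+\mu_x^{-1}z_f^{k+1}}$ that you simply discard because $\alpha\leq 1$ makes it nonpositive; the term $-\tfrac{\mu_x^{-1}}{\alpha}\sqn{z_f^{k+1}-z_g^k}$ is handled by rewriting $z_f^{k+1}-z_g^k$ as $\bigl(z_f^{k+1}+\tfrac{\mu_x}{2}(x_f^{k+1}-\mu_x^{-1}z_g^k)\bigr)-\tfrac{\mu_x}{2}(x_f^{k+1}+\mu_x^{-1}z_g^k)$ and applying $-\sqn{a+b}\leq\sqn{b}-\tfrac12\sqn{a}$; and the coefficient that emerges on $\sqn{z_f^{k+1}+\tfrac{\mu_x}{2}(x_f^{k+1}-\mu_x^{-1}z_g^k)}$ is $5\mu_x^{-1}/\alpha$, which is then loosely relaxed to $8\mu_x^{-1}/\alpha$. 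So several nonnegative quantities are thrown away, not one, and the constants in the final line are deliberately slack rather than tight.
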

\begin{proof}
	Using \cref{pp:line:z} of \Cref{alg:pp} we get
	\begin{align*}
		\frac{1}{\eta_z}\sqn{z^{k+1} - z^*}
		&=
		\frac{1}{\eta_z}\sqn{z^k - z^*} + \frac{2}{\eta_z}\<z^{k+1} - z^k, z^k - z^*> + \frac{1}{\eta_z}\sqn{z^{k+1} - z^k}
		\\&=
		\frac{1}{\eta_z}\sqn{z^k - z^*}
		+\eta_z\sqn{\mu_x^{-1}(z_f^{k+1} - z^k) - (x_f^{k+1} + \mu_x^{-1}z_f^{k+1})}
		\\&
		+2\mu_x^{-1}\<z_f^{k+1} - z^k, z^k - z^*>
		-2\<x_f^{k+1} + \mu_x^{-1} z_f^{k+1}, z^k - z^*>.
	\end{align*}
	Using the parallelogram rule we get
	\begin{align*}
		\frac{1}{\eta_z}\sqn{z^{k+1} - z^*}
		&=
		\frac{1}{\eta_z}\sqn{z^k - z^*}
		+\eta_z\sqn{\mu_x^{-1}(z_f^{k+1} - z^k) - (x_f^{k+1} + \mu_x^{-1}z_f^{k+1})}
		\\&
		+\mu_x^{-1}\sqn{z_f^{k+1} - z^*}
		-\mu_x^{-1}\sqn{z^k - z^*}
		-\mu_x^{-1}\sqn{z_f^{k+1} - z^k}
		\\&
		-2\<x_f^{k+1} + \mu_x^{-1} z_f^{k+1}, z^k - z^*>.
	\end{align*}
	Using the inequality $\sqn{a+b} \leq 2\sqn{a} + 2\sqn{b}$ we get
	\begin{align*}
		\frac{1}{\eta_z}\sqn{z^{k+1} - z^*}
		&\leq
		\frac{1}{\eta_z}\sqn{z^k - z^*}
		+2\eta_z\sqn{x_f^{k+1} + \mu_x^{-1}z_f^{k+1}}
		+2\eta_z\mu_x^{-2}\sqn{z_f^{k+1} - z^k}
		\\&
		+\mu_x^{-1}\sqn{z_f^{k+1} - z^*}
		-\mu_x^{-1}\sqn{z^k - z^*}
		-\mu_x^{-1}\sqn{z_f^{k+1} - z^k}
		\\&
		-2\<x_f^{k+1} + \mu_x^{-1} z_f^{k+1}, z^k - z^*>.
	\end{align*}
	Using the definition of $\eta_z$ we get
	\begin{align*}
		\frac{1}{\eta_z}\sqn{z^{k+1} - z^*}
		&\leq
		\frac{1}{\eta_z}\sqn{z^k - z^*}
		+\mu_x\sqn{x_f^{k+1} + \mu_x^{-1}z_f^{k+1}}
		+\mu_x^{-1}\sqn{z_f^{k+1} - z^k}
		\\&
		+\mu_x^{-1}\sqn{z_f^{k+1} - z^*}
		-\mu_x^{-1}\sqn{z^k - z^*}
		-\mu_x^{-1}\sqn{z_f^{k+1} - z^k}
		\\&
		-2\<x_f^{k+1} + \mu_x^{-1} z_f^{k+1}, z^k - z^*>
		\\&=
		\left(\frac{1}{\eta_z} - \mu_x^{-1}\right)\sqn{z^k - z^*}
		+\mu_x\sqn{x_f^{k+1} + \mu_x^{-1}z_f^{k+1}}
		\\&
		+\mu_x^{-1}\sqn{z_f^{k+1} - z^*}
		-2\<x_f^{k+1} + \mu_x^{-1} z_f^{k+1}, z^k - z^*>.
	\end{align*}
	From \cref{pp:line:convex} of \Cref{alg:pp} we get $z^k = \alpha^{-1}z_g^k - (1-\alpha)\alpha^{-1}z_f^k$ which implies
	\begin{align*}
		\frac{1}{\eta_z}\sqn{z^{k+1} - z^*}
		&\leq
		\left(\frac{1}{\eta_z} - \mu_x^{-1}\right)\sqn{z^k - z^*}
		+\mu_x\sqn{x_f^{k+1} + \mu_x^{-1}z_f^{k+1}}
		\\&
		+\mu_x^{-1}\sqn{z_f^{k+1} - z^*}
		-2\<x_f^{k+1} + \mu_x^{-1} z_f^{k+1}, \alpha^{-1}z_g^k - (1-\alpha)\alpha^{-1}z_f^k - z^*>
		\\&=
		\left(\frac{1}{\eta_z} - \mu_x^{-1}\right)\sqn{z^k - z^*}
		+\mu_x\sqn{x_f^{k+1} + \mu_x^{-1}z_f^{k+1}}
		+\mu_x^{-1}\sqn{z_f^{k+1} - z^*}
		\\&
		+\frac{2}{\alpha}\<x_f^{k+1} + \mu_x^{-1} z_f^{k+1}, (1-\alpha)z_f^k + \alpha z^*- z_f^{k+1}>
		\\&
		+\frac{2}{\alpha}\<x_f^{k+1} + \mu_x^{-1} z_f^{k+1}, z_f^{k+1} - z_g^k >
		\\&=
		\left(\frac{1}{\eta_z} - \mu_x^{-1}\right)\sqn{z^k - z^*}
		+\mu_x\sqn{x_f^{k+1} + \mu_x^{-1}z_f^{k+1}}
		+\mu_x^{-1}\sqn{z_f^{k+1} - z^*}
		\\&
		+\frac{2}{\alpha}\<x_f^{k+1} + \mu_x^{-1} z_f^{k+1}, (1-\alpha)z_f^k + \alpha z^*- z_f^{k+1}>
		\\&
		+\frac{2 \mu_x }{\alpha}\< x_f^{k+1} +\mu_x^{-1}z_f^{k+1}, \mu_x^{-1}(z_f^{k+1} - z_g^k) >.
	\end{align*}
	Using the parallelogram rule we get
	\begin{align*}
		\frac{1}{\eta_z}\sqn{z^{k+1} - z^*}
		&\leq
		\left(\frac{1}{\eta_z} - \mu_x^{-1}\right)\sqn{z^k - z^*}
		+\mu_x\sqn{x_f^{k+1} + \mu_x^{-1}z_f^{k+1}}
		+\mu_x^{-1}\sqn{z_f^{k+1} - z^*}
		\\&
		+\frac{2}{\alpha}\<x_f^{k+1} + \mu_x^{-1} z_f^{k+1}, (1-\alpha)z_f^k + \alpha z^*- z_f^{k+1}>
		\\&
		+\frac{\mu_x}{\alpha}\sqn{x_f^{k+1} +2\mu_x^{-1}z_f^{k+1} - \mu_x^{-1}z_g^k}
		-\frac{\mu_x^{-1}}{\alpha}\sqn{z_f^{k+1} - z_g^k}
		-\frac{\mu_x }{\alpha}\sqn{x_f^{k+1} + \mu_x^{-1} z_g^k}
		\\&=		
		\left(\frac{1}{\eta_z} - \mu_x^{-1}\right)\sqn{z^k - z^*}
		+\mu_x(1-\alpha^{-1})\sqn{x_f^{k+1} + \mu_x^{-1}z_f^{k+1}}
		+\mu_x^{-1}\sqn{z_f^{k+1} - z^*}
		\\&
		+\frac{2}{\alpha}\<x_f^{k+1} + \mu_x^{-1} z_f^{k+1}, (1-\alpha)z_f^k + \alpha z^*- z_f^{k+1}>
		\\&
		+\frac{4\mu_x^{-1}}{\alpha}\sqn{z_f^{k+1} + \frac{\mu_x}{2}(x_f^{k+1} - \mu_x^{-1} z_g^k)}
		-\frac{\mu_x^{-1}}{\alpha}\sqn{z_f^{k+1} - z_g^k}.
	\end{align*}
	Using the fact that $\alpha^{-1} \geq 1$ we get
	\begin{align*}
		\frac{1}{\eta_z}\sqn{z^{k+1} - z^*}
		&\leq
		\left(\frac{1}{\eta_z} - \mu_x^{-1}\right)\sqn{z^k - z^*}
		+\mu_x^{-1}\sqn{z_f^{k+1} - z^*}
		\\&
		+\frac{2}{\alpha}\<x_f^{k+1} + \mu_x^{-1} z_f^{k+1}, (1-\alpha)z_f^k + \alpha z^*- z_f^{k+1}>
		\\&
		+\frac{4\mu_x^{-1}}{\alpha}\sqn{z_f^{k+1} + \frac{\mu_x}{2}(x_f^{k+1} - \mu_x^{-1} z_g^k)}
		-\frac{\mu_x^{-1}}{\alpha}\sqn{z_f^{k+1} - z_g^k}
		\\&=
		\left(\frac{1}{\eta_z} - \mu_x^{-1}\right)\sqn{z^k - z^*}
		+\mu_x^{-1}\sqn{z_f^{k+1} - z^*}
		\\&
		+\frac{2}{\alpha}\<x_f^{k+1} + \mu_x^{-1} z_f^{k+1}, (1-\alpha)z_f^k + \alpha z^*- z_f^{k+1}>
		\\&
		+\frac{4\mu_x^{-1}}{\alpha}\sqn{z_f^{k+1} + \frac{\mu_x}{2}(x_f^{k+1} - \mu_x^{-1} z_g^k)}
		\\&
		-\frac{\mu_x^{-1}}{\alpha}\sqn{z_f^{k+1} + \frac{\mu_x}{2}(x_f^{k+1} - \mu_x^{-1} z_g^k) - \frac{\mu_x}{2}(x_f^{k+1} - \mu_x^{-1} z_g^k)- z_g^k}.
	\end{align*}
	Using the inequality $-\sqn{a+b} \leq  \sqn{b}-\frac{1}{2}\sqn{a} $ we get
	\begin{align*}
		\frac{1}{\eta_z}\sqn{z^{k+1} - z^*}
		&\leq
		\left(\frac{1}{\eta_z} - \mu_x^{-1}\right)\sqn{z^k - z^*}
		+\mu_x^{-1}\sqn{z_f^{k+1} - z^*}
		\\&
		+\frac{2}{\alpha}\<x_f^{k+1} + \mu_x^{-1} z_f^{k+1}, (1-\alpha)z_f^k + \alpha z^*- z_f^{k+1}>
		\\&
		+\frac{4\mu_x^{-1}}{\alpha}\sqn{z_f^{k+1} + \frac{\mu_x}{2}(x_f^{k+1} - \mu_x^{-1} z_g^k)}
		\\&
		+\frac{\mu_x^{-1}}{\alpha}\sqn{z_f^{k+1} + \frac{\mu_x}{2}(x_f^{k+1} - \mu_x^{-1} z_g^k)}
		-\frac{\mu_x^{-1}}{2\alpha}\sqn{\frac{\mu_x}{2}(x_f^{k+1} -\mu_x^{-1} z_g^k)+ z_g^k}
		\\&=
		\left(\frac{1}{\eta_z} - \mu_x^{-1}\right)\sqn{z^k - z^*}
		+\mu_x^{-1}\sqn{z_f^{k+1} - z^*}
		\\&
		+\frac{2}{\alpha}\<x_f^{k+1} + \mu_x^{-1} z_f^{k+1}, (1-\alpha)z_f^k + \alpha z^*- z_f^{k+1}>
		\\&
		+\frac{5\mu_x^{-1}}{\alpha}\sqn{z_f^{k+1} + \frac{\mu_x}{2}(x_f^{k+1} - \mu_x^{-1} z_g^k)}
		-\frac{\mu_x}{8\alpha}\sqn{x_f^{k+1} +\mu_x^{-1} z_g^k}.
	\end{align*}
	Using the inequality $5\leq 8$ concludes the proof
\end{proof}

\begin{lemma}\label{pp:lem:2}
	Under conditions of \Cref{pp:thm} the following inequality holds:
	\begin{equation}
		\begin{split}
			\frac{1}{\eta_y}\sqn{y^{k+1} - y^*}
			&\leq
			\left(\frac{1}{\eta_y} - \mu_y\right)\sqn{y^k - y^*}
			+\mu_y\sqn{y_f^{k+1} - y^*}
			\\&
			+\frac{2}{\alpha}\<w_f^{k+1} + \mu_y y_f^{k+1}, (1-\alpha)y_f^k + \alpha y^* - y_f^{k+1}>
			\\&
			+\frac{1}{\alpha}
			\left(
			\theta_y\sqn{w_f^{k+1} + \mu_y y_f^{k+1} + \theta_y^{-1} (y_f^{k+1} - y_g^k)}
			-\theta_y^{-1}\sqn{y_f^{k+1} - y_g^k}
			\right).
		\end{split}
	\end{equation}
\end{lemma}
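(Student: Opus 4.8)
The plan is to mirror the proof of \Cref{pp:lem:1}, working now with \cref{pp:line:y} of \Cref{alg:pp} in place of \cref{pp:line:z}, under the dictionary $z\leftrightarrow y$, $z_f\leftrightarrow y_f$, $z_g\leftrightarrow y_g$, $x_f\leftrightarrow w_f$, with the strong-convexity constant $\mu_x^{-1}$ of $P$ in $z$ playing the role of the strong-concavity constant $\mu_y$ of $P$ in $y$, and the proximal stepsize $\theta_z=\mu_x$ the role of $\theta_y$. Recall that $\Delta_y^k=w_f^{k+1}+\mu_y y_f^{k+1}+\theta_y^{-1}(y_f^{k+1}-y_g^k)$, so the asserted final term is $\tfrac1\alpha\big(\theta_y\sqn{\Delta_y^k}-\theta_y^{-1}\sqn{y_f^{k+1}-y_g^k}\big)$.

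First I would expand $\tfrac1{\eta_y}\sqn{y^{k+1}-y^*}$ via the identity $y^{k+1}-y^k=\eta_y\mu_y(y_f^{k+1}-y^k)-\eta_y(w_f^{k+1}+\mu_y y_f^{k+1})$ from \cref{pp:line:y}. This produces the cross-terms $2\mu_y\langle y_f^{k+1}-y^k,\,y^k-y^*\rangle$ and $-2\langle w_f^{k+1}+\mu_y y_f^{k+1},\,y^k-y^*\rangle$ together with $\eta_y\sqn{\mu_y(y_f^{k+1}-y^k)-(w_f^{k+1}+\mu_y y_f^{k+1})}$. I would apply the parallelogram identity to the first cross-term, turning it into $\mu_y\sqn{y_f^{k+1}-y^*}-\mu_y\sqn{y_f^{k+1}-y^k}-\mu_y\sqn{y^k-y^*}$, and the bound $\sqn{a+b}\le 2\sqn a+2\sqn b$ to the squared term. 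Both halves of the $\min$ in \eqref{pp:eta} now enter: since $\eta_y\le 1/(2\mu_y)$, the coefficient $\mu_y(2\eta_y\mu_y-1)$ of $\sqn{y_f^{k+1}-y^k}$ is nonpositive, so that term can be dropped; and since $2\eta_y\le\theta_y/\alpha$, the term $2\eta_y\sqn{w_f^{k+1}+\mu_y y_f^{k+1}}$ is at most $\tfrac{\theta_y}{\alpha}\sqn{w_f^{k+1}+\mu_y y_f^{k+1}}$.

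Then I would use \cref{pp:line:convex} in the form $y^k=\alpha^{-1}y_g^k-(1-\alpha)\alpha^{-1}y_f^k$, so that $y^k-y^*=\alpha^{-1}\big[(y_f^{k+1}-(1-\alpha)y_f^k-\alpha y^*)-(y_f^{k+1}-y_g^k)\big]$; this splits $-2\langle w_f^{k+1}+\mu_y y_f^{k+1},\,y^k-y^*\rangle$ into the target cross-term $\tfrac2\alpha\langle w_f^{k+1}+\mu_y y_f^{k+1},\,(1-\alpha)y_f^k+\alpha y^*-y_f^{k+1}\rangle$ plus a remainder $\tfrac2\alpha\langle w_f^{k+1}+\mu_y y_f^{k+1},\,y_f^{k+1}-y_g^k\rangle$. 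Rewriting the remainder as $\tfrac{2\theta_y}{\alpha}\langle w_f^{k+1}+\mu_y y_f^{k+1},\,\theta_y^{-1}(y_f^{k+1}-y_g^k)\rangle$ and expanding with $2\langle a,b\rangle=\sqn{a+b}-\sqn a-\sqn b$ gives $\tfrac{\theta_y}{\alpha}\sqn{\Delta_y^k}-\tfrac{\theta_y}{\alpha}\sqn{w_f^{k+1}+\mu_y y_f^{k+1}}-\tfrac{\theta_y^{-1}}{\alpha}\sqn{y_f^{k+1}-y_g^k}$. The two $\sqn{w_f^{k+1}+\mu_y y_f^{k+1}}$ contributions cancel ($+\tfrac{\theta_y}{\alpha}$ from the previous step against $-\tfrac{\theta_y}{\alpha}$ here), and collecting what remains yields exactly the claimed inequality.

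I do not expect a genuine obstacle: this is the $y$-analogue of \Cref{pp:lem:1}, and it is in fact shorter, because the choice $\gamma_y=\theta_y$ makes the proximal stepsize and its associated squared term match, so none of the rescaling at the end of \Cref{pp:lem:1} (the ``$5\le 8$'' step and the auxiliary bound $-\sqn{a+b}\le\sqn b-\tfrac12\sqn a$) is needed here. The one point requiring care is that, unlike $\eta_z=\mu_x/2$ in \Cref{pp:lem:1} which is exactly tight, here $\eta_y$ may be strictly below both $1/(2\mu_y)$ and $\theta_y/(2\alpha)$; accordingly \eqref{pp:eta} must be used only through the one-sided bounds $\eta_y\le 1/(2\mu_y)$ and $2\eta_y\le\theta_y/\alpha$, so that every estimate keeps the correct direction and no equality is invoked.
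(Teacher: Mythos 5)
Your proposal is correct and follows essentially the same route as the paper's proof of \Cref{pp:lem:2}: expand $\frac{1}{\eta_y}\sqn{y^{k+1}-y^*}$ via \cref{pp:line:y}, apply the parallelogram identity and $\sqn{a+b}\le 2\sqn a+2\sqn b$, drop the $\sqn{y_f^{k+1}-y^k}$ term using $\eta_y\le 1/(2\mu_y)$, substitute $y^k=\alpha^{-1}y_g^k-(1-\alpha)\alpha^{-1}y_f^k$ from \cref{pp:line:convex}, split the cross-term, expand the remainder via $2\<a,b>=\sqn{a+b}-\sqn a-\sqn b$, and drop the $\sqn{w_f^{k+1}+\mu_y y_f^{k+1}}$ term using $2\eta_y\le\theta_y/\alpha$. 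The only cosmetic difference is that you bound $2\eta_y$ by $\theta_y/\alpha$ before combining and speak of a cancellation, whereas the paper combines the coefficient to $2\eta_y-\alpha^{-1}\theta_y$ and then notes it is nonpositive; these are the same estimate.
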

\begin{proof}
	Using \cref{pp:line:y} of \Cref{alg:pp} we get
	\begin{align*}
		\frac{1}{\eta_y}\sqn{y^{k+1} - y^*}
		&=
		\frac{1}{\eta_y}\sqn{y^k - y^*}
		+\frac{2}{\eta_y}\<y^{k+1} - y^k, y^k - y^*>
		+\frac{1}{\eta_y}\sqn{y^{k+1} - y^*}
		\\&=
		\frac{1}{\eta_y}\sqn{y^k - y^*} + \eta_y\sqn{\mu_y(y_f^{k+1} - y^k) - (w_f^{k+1} + \mu_y y_f^{k+1})}
		\\&
		+2\mu_y\<y_f^{k+1} - y^k,y^k - y^*>
		-2\<w_f^{k+1} + \mu_y y_f^{k+1}, y^k - y^*>.
	\end{align*}
	Using the parallelogram rule we get
	\begin{align*}
		\frac{1}{\eta_y}\sqn{y^{k+1} - y^*}
		&=
		\frac{1}{\eta_y}\sqn{y^k - y^*} + \eta_y\sqn{\mu_y(y_f^{k+1} - y^k) - (w_f^{k+1} + \mu_y y_f^{k+1})}
		\\&
		\mu_y\sqn{y_f^{k+1} - y^*} - \mu_y\sqn{y^k - y^*} - \mu_y\sqn{y_f^{k+1} - y^k}
		\\&
		-2\<w_f^{k+1} + \mu_y y_f^{k+1}, y^k - y^*>.
	\end{align*}
	Using the inequality $\sqn{a+b} \leq 2\sqn{a} + 2\sqn{b}$ we get
	\begin{align*}
		\frac{1}{\eta_y}\sqn{y^{k+1} - y^*}
		&=
		\frac{1}{\eta_y}\sqn{y^k - y^*}
		+2\eta_y\sqn{w_f^{k+1} + \mu_y y_f^{k+1}}
		+2\eta_y\mu_y^2\sqn{y_f^{k+1} - y^k}
		\\&
		+\mu_y\sqn{y_f^{k+1} - y^*} - \mu_y\sqn{y^k - y^*} - \mu_y\sqn{y_f^{k+1} - y^k}
		\\&
		-2\<w_f^{k+1} + \mu_y y_f^{k+1}, y^k - y^*>.
	\end{align*}
	Using the definition of $\eta_y$ we get
	\begin{align*}
		\frac{1}{\eta_y}\sqn{y^{k+1} - y^*}
		&\leq
		\frac{1}{\eta_y}\sqn{y^k - y^*}
		+2\eta_y\sqn{w_f^{k+1} + \mu_y y_f^{k+1}}
		+\mu_y\sqn{y_f^{k+1} - y^k}
		\\&
		+\mu_y\sqn{y_f^{k+1} - y^*} - \mu_y\sqn{y^k - y^*} - \mu_y\sqn{y_f^{k+1} - y^k}
		\\&
		-2\<w_f^{k+1} + \mu_y y_f^{k+1}, y^k - y^*>
		\\&=
		\left(\frac{1}{\eta_y} - \mu_y\right)\sqn{y^k - y^*}
		+2\eta_y\sqn{w_f^{k+1} + \mu_y y_f^{k+1}}
		\\&
		+\mu_y\sqn{y_f^{k+1} - y^*}
		-2\<w_f^{k+1} + \mu_y y_f^{k+1}, y^k - y^*>.
	\end{align*}
	From \cref{pp:line:convex} of \Cref{alg:pp} we get $y^k = \alpha^{-1}y_g^k - (1-\alpha)\alpha^{-1}y_f^k$, which implies
	\begin{align*}
		\frac{1}{\eta_y}\sqn{y^{k+1} - y^*}
		&\leq
		\left(\frac{1}{\eta_y} - \mu_y\right)\sqn{y^k - y^*}
		+2\eta_y\sqn{w_f^{k+1} + \mu_y y_f^{k+1}}
		\\&
		+\mu_y\sqn{y_f^{k+1} - y^*}
		-2\<w_f^{k+1} + \mu_y y_f^{k+1}, \alpha^{-1}y_g^k - (1-\alpha)\alpha^{-1}y_f^k - y^*>
		\\&=
		\left(\frac{1}{\eta_y} - \mu_y\right)\sqn{y^k - y^*}
		+2\eta_y\sqn{w_f^{k+1} + \mu_y y_f^{k+1}}
		+\mu_y\sqn{y_f^{k+1} - y^*}
		\\&
		+\frac{2}{\alpha}\<w_f^{k+1} + \mu_y y_f^{k+1}, (1-\alpha)y_f^k + \alpha y^* - y_f^{k+1}>
		\\&
		+\frac{2}{\alpha}\<w_f^{k+1} + \mu_y y_f^{k+1}, y_f^{k+1} - y_g^k>
		\\&=
		\left(\frac{1}{\eta_y} - \mu_y\right)\sqn{y^k - y^*}
		+2\eta_y\sqn{w_f^{k+1} + \mu_y y_f^{k+1}}
		+\mu_y\sqn{y_f^{k+1} - y^*}
		\\&
		+\frac{2}{\alpha}\<w_f^{k+1} + \mu_y y_f^{k+1}, (1-\alpha)y_f^k + \alpha y^* - y_f^{k+1}>
		\\&
		+\frac{2\theta_y}{\alpha}\<w_f^{k+1} + \mu_y y_f^{k+1}, \theta_y^{-1} (y_f^{k+1} - y_g^k)>
	\end{align*}
	Using the parallelogram rule we get
	\begin{align*}
		\frac{1}{\eta_y}\sqn{y^{k+1} - y^*}
		&\leq
		\left(\frac{1}{\eta_y} - \mu_y\right)\sqn{y^k - y^*}
		+2\eta_y\sqn{w_f^{k+1} + \mu_y y_f^{k+1}}
		+\mu_y\sqn{y_f^{k+1} - y^*}
		\\&
		+\frac{2}{\alpha}\<w_f^{k+1} + \mu_y y_f^{k+1}, (1-\alpha)y_f^k + \alpha y^* - y_f^{k+1}>
		\\&
		+\frac{\theta_y}{\alpha}\sqn{w_f^{k+1} + \mu_y y_f^{k+1} + \theta_y^{-1} (y_f^{k+1} - y_g^k)}
		\\&
		-\frac{\theta_y}{\alpha}\sqn{w_f^{k+1} + \mu_y y_f^{k+1}}
		-\frac{\theta_y^{-1}}{\alpha}\sqn{y_f^{k+1} - y_g^k}
		\\&=
		\left(\frac{1}{\eta_y} - \mu_y\right)\sqn{y^k - y^*}
		+(2\eta_y - \alpha^{-1}\theta_y)\sqn{w_f^{k+1} + \mu_y y_f^{k+1}}
		+\mu_y\sqn{y_f^{k+1} - y^*}
		\\&
		+\frac{2}{\alpha}\<w_f^{k+1} + \mu_y y_f^{k+1}, (1-\alpha)y_f^k + \alpha y^* - y_f^{k+1}>
		\\&
		+\frac{\theta_y}{\alpha}\sqn{w_f^{k+1} + \mu_y y_f^{k+1} + \theta_y^{-1} (y_f^{k+1} - y_g^k)}
		-\frac{\theta_y^{-1}}{\alpha}\sqn{y_f^{k+1} - y_g^k}.
	\end{align*}
	From the definition of $\eta_y$ it follows that $2\eta_y \leq \alpha^{-1}\theta_y$. Hence,
	\begin{align*}
		\frac{1}{\eta_y}\sqn{y^{k+1} - y^*}
		&\leq
		\left(\frac{1}{\eta_y} - \mu_y\right)\sqn{y^k - y^*}
		+\mu_y\sqn{y_f^{k+1} - y^*}
		\\&
		+\frac{2}{\alpha}\<w_f^{k+1} + \mu_y y_f^{k+1}, (1-\alpha)y_f^k + \alpha y^* - y_f^{k+1}>
		\\&
		+\frac{\theta_y}{\alpha}\sqn{w_f^{k+1} + \mu_y y_f^{k+1} + \theta_y^{-1} (y_f^{k+1} - y_g^k)}
		-\frac{\theta_y^{-1}}{\alpha}\sqn{y_f^{k+1} - y_g^k}.
	\end{align*}
\end{proof}

\begin{lemma}\label{pp:lem:3}
	Under conditions of \Cref{pp:thm}, let $\cL^k$ be the following Lyapunov function
	\begin{equation}
		\cL^k = \frac{1}{\eta_z}\sqn{z^k - z^*} + \frac{1}{\eta_y}\sqn{y^k - y^*} + \frac{2}{\alpha}\left(P(z_f^k,y_f^k) - P(z^*,y^*)\right).
	\end{equation}
	Then, the following inequality holds
	\begin{equation}
		\cL^{k+1} \leq \left(1 - \max\left\{\frac{2}{\alpha}, \frac{2\alpha}{\theta_y\mu_y}\right\}^{-1}\right)\cL^k.
	\end{equation}
\end{lemma}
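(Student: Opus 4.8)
The plan is to add the two one-step estimates \Cref{pp:lem:1} and \Cref{pp:lem:2} and then recognize their common structure. The observation that ties everything together is that the pair $(g_z^{k+1},g_y^{k+1})\coloneqq(x_f^{k+1}+\mu_x^{-1}z_f^{k+1},\,w_f^{k+1}+\mu_y y_f^{k+1})$ is a subgradient of $P$ at $(z_f^{k+1},y_f^{k+1})$. Indeed, the first two lines of the relaxed condition \eqref{pp:monteiro-svaiter} together with \Cref{lem:subdifferential} give $(x_f^{k+1},w_f^{k+1})\in\partial G(z_f^{k+1},y_f^{k+1})$, and since $P(z,y)=\tfrac{\mu_x^{-1}}{2}\sqn{z}+\tfrac{\mu_y}{2}\sqn{y}+G(z,y)$, adding the gradient of the quadratic part yields $(g_z^{k+1},g_y^{k+1})\in\partial P(z_f^{k+1},y_f^{k+1})$. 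In particular $P$ is $\mu_x^{-1}$-strongly convex in $z$ and $\mu_y$-strongly convex in $y$, and these are exactly the constants multiplying $\sqn{z_f^{k+1}-z^*}$ and $\sqn{y_f^{k+1}-y^*}$ in \Cref{pp:lem:1,pp:lem:2}.

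Next I would process the inner-product terms. I split $(1-\alpha)z_f^k+\alpha z^*-z_f^{k+1}=(1-\alpha)(z_f^k-z_f^{k+1})+\alpha(z^*-z_f^{k+1})$, and likewise for the $y$-component. On the $(1-\alpha)$-piece I apply plain convexity of $P$ from $(z_f^k,y_f^k)$ to $(z_f^{k+1},y_f^{k+1})$, contributing $\tfrac{2(1-\alpha)}{\alpha}\big(P(z_f^k,y_f^k)-P(z_f^{k+1},y_f^{k+1})\big)$; on the $\alpha$-piece I apply strong convexity of $P$ from $(z_f^{k+1},y_f^{k+1})$ to $(z^*,y^*)$, contributing $2\big(P(z^*,y^*)-P(z_f^{k+1},y_f^{k+1})\big)-\mu_x^{-1}\sqn{z^*-z_f^{k+1}}-\mu_y\sqn{y^*-y_f^{k+1}}$. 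The two negative quadratics cancel precisely the $+\mu_x^{-1}\sqn{z_f^{k+1}-z^*}$ and $+\mu_y\sqn{y_f^{k+1}-y^*}$ coming from \Cref{pp:lem:1,pp:lem:2}. Meanwhile, summing the two residual terms of \Cref{pp:lem:1,pp:lem:2} gives $\tfrac{1}{\alpha}\big(\tfrac{8}{\mu_x}\sqn{\Delta_x^k}+\theta_y\sqn{\Delta_y^k}-\tfrac{\mu_x}{8}\sqn{x_f^{k+1}+\mu_x^{-1}z_g^k}-\theta_y^{-1}\sqn{y_f^{k+1}-y_g^k}\big)$, which is $\le 0$ by the third line of \eqref{pp:monteiro-svaiter} — this is exactly the purpose for which that inexactness criterion was designed.

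Combining, and setting $\delta^k\coloneqq P(z_f^k,y_f^k)-P(z^*,y^*)\ge 0$ (so $P(z_f^k,y_f^k)-P(z_f^{k+1},y_f^{k+1})=\delta^k-\delta^{k+1}$ and $P(z^*,y^*)-P(z_f^{k+1},y_f^{k+1})=-\delta^{k+1}$), the function-value bookkeeping collapses to $\cL^{k+1}\le\big(\tfrac1{\eta_z}-\mu_x^{-1}\big)\sqn{z^k-z^*}+\big(\tfrac1{\eta_y}-\mu_y\big)\sqn{y^k-y^*}+\tfrac{2(1-\alpha)}{\alpha}\delta^k$. The last step is a termwise comparison with $(1-\rho)\cL^k$, where $\rho=\max\{\tfrac2\alpha,\tfrac{2\alpha}{\theta_y\mu_y}\}^{-1}=\min\{\tfrac\alpha2,\tfrac{\theta_y\mu_y}{2\alpha}\}$: with $\eta_z=\mu_x/2$ from \eqref{pp:eta} one checks $\rho\le\mu_x^{-1}\eta_z=\tfrac12$ for the $z$-term; with $\eta_y=\min\{\tfrac1{2\mu_y},\tfrac{\theta_y}{2\alpha}\}$ one checks $\rho\le\mu_y\eta_y=\min\{\tfrac12,\tfrac{\theta_y\mu_y}{2\alpha}\}$ for the $y$-term; and $\rho\le\tfrac\alpha2\le\alpha$ for the $\delta^k$-term. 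All three hold because $\alpha\le1$, which finishes the proof.

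I expect the only real subtlety to be the right split of the convex combination: strong convexity must be spent on the $\alpha(z^*-z_f^{k+1})$ part (to annihilate the $\mu_x^{-1}\sqn{z_f^{k+1}-z^*}$ and $\mu_y\sqn{y_f^{k+1}-y^*}$ leftovers), while ordinary convexity on the $(1-\alpha)$ part produces the telescoping-in-$\delta^k$ structure; once this is identified, what remains is routine algebra plus the three scalar inequalities against the stepsize formulas.
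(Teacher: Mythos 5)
Your proposal is correct and follows essentially the same route as the paper's proof: sum \Cref{pp:lem:1,pp:lem:2}, drop the residual via the third line of \eqref{pp:monteiro-svaiter}, identify $(x_f^{k+1}+\mu_x^{-1}z_f^{k+1},\,w_f^{k+1}+\mu_y y_f^{k+1})\in\partial P(z_f^{k+1},y_f^{k+1})$ via \Cref{lem:subdifferential}, split the inner product into the $(1-\alpha)$-piece (plain convexity, yielding the telescoping term) and the $\alpha$-piece (strong convexity, cancelling the leftover quadratics), and finish by termwise comparison with $(1-\rho)\cL^k$ using $\alpha\le 1$. The only cosmetic difference is that the paper first writes the contraction factor as $1-\max\{2,\tfrac1\alpha,\tfrac{2\alpha}{\theta_y\mu_y}\}^{-1}$ and then relaxes it to $1-\max\{\tfrac2\alpha,\tfrac{2\alpha}{\theta_y\mu_y}\}^{-1}$, whereas you verify the three scalar inequalities directly against the relaxed $\rho$; both are the same computation.
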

\begin{proof}
	We start with combining \Cref{pp:lem:1,pp:lem:2} and get
	\begin{align*}
		\text{(DISTANCE)}
		&\leq
		\left(\frac{1}{\eta_z} - \mu_x^{-1}\right)\sqn{z^k - z^*}
		+\mu_x^{-1}\sqn{z_f^{k+1} - z^*}
		\\&
		+\frac{2}{\alpha}\<x_f^{k+1} + \mu_x^{-1} z_f^{k+1}, (1-\alpha)z_f^k + \alpha z^*- z_f^{k+1}>
		\\&
		+\frac{1}{\alpha}
		\left(
		\frac{8}{\mu_x}\sqn{z_f^{k+1} + \frac{\mu_x}{2}(x_f^{k+1} - \mu_x^{-1} z_g^k)}
		-\frac{\mu_x}{8}\sqn{x_f^{k+1} +\mu_x^{-1} z_g^k}
		\right)
		\\&
		+\left(\frac{1}{\eta_y} - \mu_y\right)\sqn{y^k - y^*}
		+\mu_y\sqn{y_f^{k+1} - y^*}
		\\&
		+\frac{2}{\alpha}\<w_f^{k+1} + \mu_y y_f^{k+1}, (1-\alpha)y_f^k + \alpha y^* - y_f^{k+1}>
		\\&
		+\frac{1}{\alpha}
		\left(
		\theta_y\sqn{w_f^{k+1} + \mu_y y_f^{k+1} + \theta_y^{-1} (y_f^{k+1} - y_g^k)}
		-\theta_y^{-1}\sqn{y_f^{k+1} - y_g^k}
		\right).
	\end{align*}
	where $\text{(DISTANCE)}$ is defined as
	\begin{equation*}
		\text{(DISTANCE)} = \frac{1}{\eta_z}\sqn{z^{k+1} - z^*} + \frac{1}{\eta_y}\sqn{y^{k+1} - y^*}.
	\end{equation*}
	Using condition \eqref{pp:monteiro-svaiter} we get
	\begin{align*}
		\text{(DISTANCE)}
		&\leq
		\left(\frac{1}{\eta_z} - \mu_x^{-1}\right)\sqn{z^k - z^*}
		+\left(\frac{1}{\eta_y} - \mu_y\right)\sqn{y^k - y^*}
		\\&
		+\mu_x^{-1}\sqn{z_f^{k+1} - z^*}
		+\mu_y\sqn{y_f^{k+1} - y^*}
		\\&
		+\frac{2}{\alpha}\<x_f^{k+1} + \mu_x^{-1} z_f^{k+1}, (1-\alpha)z_f^k + \alpha z^*- z_f^{k+1}>
		\\&
		+\frac{2}{\alpha}\<w_f^{k+1} + \mu_y y_f^{k+1}, (1-\alpha)y_f^k + \alpha y^* - y_f^{k+1}>
		\\&=
		\left(\frac{1}{\eta_z} - \mu_x^{-1}\right)\sqn{z^k - z^*}
		+\left(\frac{1}{\eta_y} - \mu_y\right)\sqn{y^k - y^*}
		\\&
		+\mu_x^{-1}\sqn{z_f^{k+1} - z^*}
		+\mu_y\sqn{y_f^{k+1} - y^*}
		\\&
		+\frac{2(1-\alpha)}{\alpha}\Dotprod{\vect{x_f^{k+1}\\w_f^{k+1}} + \vect{\mu_x^{-1} z_f^{k+1}\\\mu_y y_f^{k+1}},\vect{z_f^k \\y_f^k } - \vect{z_f^{k+1}\\y_f^{k+1}}}
		\\&
		+2\Dotprod{\vect{x_f^{k+1}\\w_f^{k+1}} + \vect{\mu_x^{-1} z_f^{k+1}\\\mu_y y_f^{k+1}},\vect{z^*\\y^* } - \vect{z_f^{k+1}\\y_f^{k+1}}}.
	\end{align*}
	Using condition~\eqref{pp:monteiro-svaiter} and \Cref{lem:subdifferential} we get
	\begin{equation*}
		\vect{x_f^{k+1}\\w_f^{k+1}}  \in \partial G(z_f^{k+1}, y_f^{k+1}).
	\end{equation*}
	Using the definition of function $P(z,y)$ we get
	\begin{equation}
		\vect{x_f^{k+1}\\w_f^{k+1}} + \vect{\mu_x^{-1} z_f^{k+1}\\\mu_y y_f^{k+1}} \in \partial P(z_f^{k+1}, y_f^{k+1}).
	\end{equation}
	Hence, using the strong convexity of Function $P(z,y)$ we get
	\begin{align*}
		\text{(DISTANCE)}
		&\leq
		\left(\frac{1}{\eta_z} - \mu_x^{-1}\right)\sqn{z^k - z^*}
		+\left(\frac{1}{\eta_y} - \mu_y\right)\sqn{y^k - y^*}
		\\&
		+\mu_x^{-1}\sqn{z_f^{k+1} - z^*}
		+\mu_y\sqn{y_f^{k+1} - y^*}
		\\&
		+\frac{2(1-\alpha)}{\alpha}\left(
		P(z_f^k,y_f^k) - P(z_f^{k+1}, y_f^{k+1})
		\right)
		\\&
		+2\left(P(z^*,y^*) - P(z_f^{k+1},y_f^{k+1}) - \frac{\mu_x^{-1}}{2}\sqn{z_f^{k+1} - z^*} - \frac{\mu_y}{2}\sqn{y_f^{k+1} - y^*}\right)
		\\&=
		\left(\frac{1}{\eta_z} - \mu_x^{-1}\right)\sqn{z^k - z^*}
		+\left(\frac{1}{\eta_y} - \mu_y\right)\sqn{y^k - y^*}
		\\&
		+\frac{2(1-\alpha)}{\alpha}\left(
		P(z_f^k,y_f^k) - P(z^*,y^*)
		\right)
		-\frac{2}{\alpha}\left(
		P(z_f^{k+1},y_f^{k+1}) - P(z^*,y^*)
		\right).
	\end{align*}
	After rearranging and using the definition of $\text{(DISTANCE)}$ and the definition of $\cL^k$ we get
	\begin{align*}
		\cL^{k+1} &\leq
		\left(\frac{1}{\eta_z} - \mu_x^{-1}\right)\sqn{z^k - z^*}
		+\left(\frac{1}{\eta_y} - \mu_y\right)\sqn{y^k - y^*}
		\\&
		+\frac{2(1-\alpha)}{\alpha}\left(
		P(z_f^k,y_f^k) - P(z^*,y^*)
		\right)
		\\&\leq
		\left(1 - \max\left\{2, \frac{1}{\alpha}, \frac{2\alpha}{\theta_y\mu_y}\right\}^{-1}\right)\cL^k
		\\&\leq
		\left(1 - \max\left\{\frac{2}{\alpha}, \frac{2\alpha}{\theta_y\mu_y}\right\}^{-1}\right)\cL^k.
	\end{align*}
\end{proof}

Now, we are ready to prove \Cref{pp:thm}.

\begin{proof}[\bf Proof of \Cref{pp:thm}]
	After unrolling the recurrence from \Cref{pp:lem:3} we get
	\begin{equation*}
		\frac{1}{\eta_z}\sqn{z^K - z^*} + \frac{1}{\eta_y}\sqn{y^K - y^*} \leq 	\left(1 - \max\left\{\frac{2}{\alpha}, \frac{2\alpha}{\theta_y\mu_y}\right\}^{-1}\right)^K\cL^0.
	\end{equation*}
	Using \Cref{lem:solution} we get
	\begin{align*}
		\left(1 - \max\left\{\frac{2}{\alpha}, \frac{2\alpha}{\theta_y\mu_y}\right\}^{-1}\right)^K\cL^0 &\geq \frac{1}{\eta_z}\sqn{z^K + \mu_x z^*} + \frac{1}{\eta_y}\sqn{y^K - y^*} 
		\\&=
		\frac{\mu_x^2}{\eta_z}\sqn{\mu_x^{-1}z^K + z^*} + \frac{1}{\eta_y}\sqn{y^K - y^*} 
		\\&\geq
		\min \left\{ \eta_z^{-1} \mu_x^2, \eta_y^{-1}\right\} \left(\sqn{\mu_x^{-1}z^K + z^*}+\sqn{y^K - y^*} \right).
	\end{align*}
	After rearranging we get 
	\begin{equation*}
		\sqn{\mu_x^{-1}z^K + z^*}+\sqn{y^K - y^*}  \leq C\left(1 - \max\left\{\frac{2}{\alpha}, \frac{2\alpha}{\theta_y\mu_y}\right\}^{-1}\right)^K,
	\end{equation*}
	where $C$ is given as
	\begin{equation*}
		C = \max\{\eta_z \mu_x^{-2}, \eta_y\}\cL^0.
	\end{equation*}
	Hence, $(\sqn{\mu_x^{-1}z^K + z^*}+\sqn{y^K - y^*} )\leq \epsilon$ if the number of iterations $K$ satisfies 
	\begin{equation*}
		K \geq \max\left\{\frac{2}{\alpha}, \frac{2\alpha}{\theta_y\mu_y}\right\} \log \frac{C}{\epsilon}.
	\end{equation*}
	This concludes the proof.

\end{proof}

\section{Proof of \Cref{eag:thm}}

We start the proof with three technical lemmas.

\begin{lemma}\label{eag:lem:1}
	Under assumptions of \Cref{eag:thm} the following inequality holds:
	\begin{equation}
		96M^2\sqn{u^0 - u^*} + 12\sqn{a^0 + b^0}
		\leq
		288M^2\sqn{u^{-1} - u^* }.
	\end{equation}
\end{lemma}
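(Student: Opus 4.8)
The plan is to read the initialization of \Cref{alg:eag} as a single forward--backward step and to show that this step contracts $\sqn{u^{-1}-u^*}$ up to a controlled factor while simultaneously producing a small residual $a^0+b^0$. First I would put the update into additive form: writing $g\eqdef A(u^{-1})+b^0$, the definition of the resolvent together with \cref{eag:line:3} gives $b^0\in B(u^0)$ and $u^0 = u^{-1}-\lambda g$; also $a^0 = A(u^0)$. Then I fix a solution $u^*$ of \eqref{eq:inclusion}, set $a^* = A(u^*)$, and pick $b^*\in B(u^*)$ with $a^*+b^*=0$.

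Next I would expand
\[
	\sqn{u^0-u^*} = \sqn{u^{-1}-u^*} - 2\lambda\<g, u^{-1}-u^*> + \lambda^2\sqn{g}
\]
and lower-bound the cross term. Splitting $g = A(u^{-1})+b^0$ and $u^{-1}-u^* = (u^{-1}-u^0)+(u^0-u^*)$, the monotonicity of $A$ (\Cref{ass:A}) applied to $u^{-1},u^*$, the monotonicity of $B$ (\Cref{ass:B}) applied to $u^0,u^*$, and the identity $a^*+b^*=0$ combine to give $\<g, u^{-1}-u^*> \ge \<a^*+b^0, u^{-1}-u^0> = \lambda\<a^*+b^0, g>$, where the last step uses $u^{-1}-u^0=\lambda g$. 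Substituting back and completing the square through $g = (a^*+b^0) - (a^*-A(u^{-1}))$ converts the display into
\[
	\sqn{u^0-u^*} \le \sqn{u^{-1}-u^*} - \lambda^2\sqn{a^*+b^0} + \lambda^2\sqn{A(u^{-1})-a^*}.
\]
Since $A$ is $M$-Lipschitz, $\sqn{A(u^{-1})-a^*}\le M^2\sqn{u^{-1}-u^*}$, and $\lambda^2M^2 = 1/5$ by \eqref{eag:lambda}, so $\sqn{u^0-u^*} \le \tfrac{6}{5}\sqn{u^{-1}-u^*} - \lambda^2\sqn{a^*+b^0}$.

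Finally I would trade $\sqn{a^*+b^0}$ for the target $\sqn{a^0+b^0}$: since $a^0-a^* = A(u^0)-A(u^*)$, Lipschitzness and $\sqn{p+q}\le 2\sqn{p}+2\sqn{q}$ give $\sqn{a^*+b^0}\ge \tfrac12\sqn{a^0+b^0} - M^2\sqn{u^0-u^*}$. Plugging this in, moving the resulting $\lambda^2M^2\sqn{u^0-u^*}=\tfrac15\sqn{u^0-u^*}$ to the left-hand side, and solving for $\sqn{u^0-u^*}$ yields an inequality of the shape $\sqn{u^0-u^*} \le c_1\sqn{u^{-1}-u^*} - \tfrac{c_2}{M^2}\sqn{a^0+b^0}$ with explicit $c_1,c_2>0$; multiplying through by $96M^2$ and discarding the leftover slack gives the claim. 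The main obstacle is purely the constant bookkeeping: the step that replaces $\sqn{a^*+b^0}$ (what monotonicity naturally produces) by $\sqn{a^0+b^0}$ (what appears in the output) costs a factor of two, and one has to verify that the choice $\lambda = 1/(\sqrt5 M)$ leaves enough room for everything to recombine into a genuinely \emph{negative} multiple of $\sqn{a^0+b^0}$; the stated constant $288$ turns out to be comfortably loose.
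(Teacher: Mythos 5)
Your proof is correct, and it shares the paper's basic template — both start from the observation that $u^0$ is a single forward--backward step $J_{\lambda B}(u^{-1}-\lambda A(u^{-1}))$ applied at $u^{-1}$ while $u^*$ is a fixed point of the same map, and both finish by trading $a^*+b^0$ for $a^0+b^0$ via Lipschitzness of $A$. The genuine refinement in your argument is that you exploit the \emph{monotonicity} of $A$ (to show $\<A(u^{-1})-a^*, u^{-1}-u^*>\ge 0$) in addition to its $M$-Lipschitzness, which makes the exact identity $-2\lambda^2\<a^*+b^0,g>+\lambda^2\sqn{g}=-\lambda^2\sqn{a^*+b^0}+\lambda^2\sqn{A(u^{-1})-a^*}$ do the work cleanly. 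The paper instead invokes firm non-expansiveness of $J_{\lambda B}$ — which amounts to the same use of monotonicity of $B$ — but then bounds the pre-resolvent distance with the coarse $\sqn{p+q}\le 2\sqn{p}+2\sqn{q}$, losing a factor of $2$. The upshot is that your chain gives $96M^2\sqn{u^0-u^*}+12\sqn{a^0+b^0}\le 144M^2\sqn{u^{-1}-u^*}$, which is strictly sharper than (and hence implies) the stated constant $288$. I verified the one nontrivial claim $\<g,u^{-1}-u^*>\ge\lambda\<a^*+b^0,g>$: splitting $\<A(u^{-1}),u^{-1}-u^*>\ge\<a^*,u^{-1}-u^*>$ (monotonicity of $A$) and $\<b^0,u^0-u^*>\ge\<b^*,u^0-u^*>=-\<a^*,u^0-u^*>$ (monotonicity of $B$, $b^*=-a^*$) and adding indeed yields $\<a^*+b^0,u^{-1}-u^0>=\lambda\<a^*+b^0,g>$, so the step is sound.
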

\begin{proof}
	From \Cref{eag:line:0} of \Cref{alg:eag} it follows that
	\begin{equation*}
		u^0 = \J_{\lambda B} (u^{-1} -\lambda a^{-1}),
	\end{equation*}
	where $a^{-1} = A(u^{-1})$. Vector $u^*$ is the solution to problem~\eqref{eq:inclusion}. Hence, there exists $b^* \in B(u^*)$ such that $a^* + b^* = 0$, where $a^* = A(u^*)$. This implies
	\begin{equation*}
		u^* = \J_{\lambda B}(u^* - \lambda a^*).
	\end{equation*}
	Using the firm non-expansiveness of $\J_{\lambda B}$ we get
	\begin{align*}
		\sqn{u^0 - u^*} 
		&=
		\sqn{\J_{\lambda B}(u^{-1} - \lambda a^{-1}) - \J_{\lambda B}(u^* - \lambda a^*)}
		\\&\leq 
		\sqn{(u^{-1} - \lambda a^{-1}) - (u^* - \lambda a^*)}	
		\\&
		-\sqn{\J_{\lambda B}(u^{-1} - \lambda a^{-1}) - \J_{\lambda B}(u^* - \lambda a^*) - (u^{-1} - \lambda a^{-1}) + (u^* - \lambda a^*)}
		\\&=
		\sqn{u^{-1} - u^*  - \lambda (a^{-1} - a^*)}
		-\sqn{(u^* - \lambda a^* - u^*)- (u^{-1} - \lambda a^{-1} - u^0)}
		\\&=
		\sqn{u^{-1} - u^*  - \lambda (a^{-1} - a^*)}
		-\sqn{\lambda a^*+(u^{-1} - \lambda a^{-1} - u^0)}.
	\end{align*}
	Using \cref{eag:line:3} of \Cref{alg:eag} we get
	\begin{align*}
		\sqn{u^0 - u^*} 
		&\leq
		\sqn{u^{-1} - u^*  - \lambda (a^{-1} - a^*)}
		-\sqn{\lambda a^*+ \lambda b^0}
		\\&=
		\sqn{u^{-1} - u^*  - \lambda (a^{-1} - a^*)}
		-\sqn{\lambda (a^* - a^0)+ \lambda(a^0 + b^0)}.
	\end{align*}
	Using the inequality $\sqn{\lambda (a^* - a^0)+ \lambda(a^0 + b^0)} \geq \frac{\lambda^2}{2}\sqn{a^0 + b^0} - \lambda^2\sqn{a^0 - a^*}$ we get
	\begin{align*}
		\sqn{u^0 - u^*} 
		&\leq
		\sqn{u^{-1} - u^*  - \lambda (a^{-1} - a^*)}
		-\frac{\lambda^2}{2}\sqn{a^0 + b^0} + \lambda^2\sqn{a^0 - a^*}.
	\end{align*}
	Using the inequality $\sqn{u^{-1} - u^*  - \lambda (a^{-1} - a^*)} \leq 2\sqn{u^{-1} - u^* }+2\lambda^2\sqn{a^{-1} - a^*}$ we get
	\begin{align*}
		\sqn{u^0 - u^*} 
		&\leq
		2\sqn{u^{-1} - u^* }+2\lambda^2\sqn{a^{-1} - a^*}
		-\frac{\lambda^2}{2}\sqn{a^0 + b^0} + \lambda^2\sqn{a^0 - a^*}.
	\end{align*}
	Using the $M$-Lipschitzness of $A(u)$ we get
	\begin{align*}
		\sqn{u^0 - u^*} 
		&\leq
		2\sqn{u^{-1} - u^* }+2\lambda^2M^2\sqn{u^{-1}-u^*}
		-\frac{\lambda^2}{2}\sqn{a^0 + b^0} + \lambda^2M^2\sqn{u^0 - u^*}.
	\end{align*}
	After rearranging we get
	\begin{align*}
		(1 - \lambda^2 M^2)\sqn{u^0 - u^*} + \frac{\lambda^2}{2}\sqn{a^0 + b^0}
		&\leq
		2(1 + \lambda^2 M^2)\sqn{u^{-1} - u^* }.
	\end{align*}
	Plugging the definition of $\lambda$  gives
	\begin{align*}
		\frac{4}{5}\sqn{u^0 - u^*} + \frac{1}{10M^2}\sqn{a^0 + b^0}
		&\leq
		\frac{12}{5}\sqn{u^{-1} - u^* }.
	\end{align*}
	Multiplying both sides of the inequality by $120M^2$ gives
	\begin{align*}
		96M^2\sqn{u^0 - u^*} + 12\sqn{a^0 + b^0}
		&\leq
		M^2\sqn{u^{-1} - u^* }.
	\end{align*}
\end{proof}

\begin{lemma}\label{eag:lem:2}
	Under conditions of \Cref{eag:thm} the following equality holds:
	\begin{equation}
		\prod_{t=0}^{T-1} (1-\beta_t) = \frac{2}{(T+1)(T+2)}.
	\end{equation}
\end{lemma}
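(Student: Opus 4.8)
The plan is to turn the product into a telescoping one. First I would substitute the definition $\beta_t = 2/(t+3)$ and observe that
\[
1 - \beta_t = 1 - \frac{2}{t+3} = \frac{t+1}{t+3},
\]
so that
\[
\prod_{t=0}^{T-1}(1-\beta_t) = \prod_{t=0}^{T-1}\frac{t+1}{t+3}.
\]
Both the numerator and denominator are shifted factorials: the numerator is $\prod_{t=0}^{T-1}(t+1) = 1\cdot 2\cdots T = T!$, and the denominator is $\prod_{t=0}^{T-1}(t+3) = 3\cdot 4\cdots (T+2) = (T+2)!/2$. Dividing gives $2\,T!/(T+2)! = 2/\big((T+1)(T+2)\big)$, which is exactly the claimed identity.

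An equivalent route, if one prefers to avoid manipulating factorials, is induction on $T$: the base case $T=1$ reads $1-\beta_0 = 1/3 = 2/(2\cdot 3)$, and for the inductive step one multiplies the identity for $T$ by the factor $1-\beta_T = (T+1)/(T+3)$ and simplifies $\frac{2}{(T+1)(T+2)}\cdot\frac{T+1}{T+3} = \frac{2}{(T+2)(T+3)}$, matching the claim at $T+1$.

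The main ``obstacle'' here is purely bookkeeping: one only has to be careful with the index shifts in the two products (the numerator runs over $1,\dots,T$ while the denominator runs over $3,\dots,T+2$), after which the cancellation is immediate. No structural difficulty is expected.
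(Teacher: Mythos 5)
Your proposal is correct and matches the paper's own proof: both substitute $\beta_t = 2/(t+3)$, rewrite $1-\beta_t = (t+1)/(t+3)$, and evaluate the telescoping product as a ratio of shifted factorials. The alternative induction route you sketch is also sound, but the primary argument is essentially identical to the paper's.
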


\begin{proof}
	\begin{align*}
		\prod_{t=0}^{T-1} (1-\beta_t)
		&=
		\prod_{t=0}^{T-1} \left(1 - \frac{2}{t+3}\right)
		=
		\prod_{t=0}^{T-1} \frac{t+1}{t+3}
		=
		\frac{\prod_{t=1}^T t}{\prod_{t=3}^{T+2} t}
		=\frac{2}{(T+1)(T+2)}.
	\end{align*}
\end{proof}

\begin{lemma}\label{eag:lem:3}
	Under conditions of \Cref{eag:thm}, let $\cU^t$ be the following Lyapunov function
	\begin{equation}
		\cU^t = \<a^t+b^t,u^t - u^0> + \frac{\lambda}{2\beta_t}\sqn{a^t + b^t}.
	\end{equation}
	Then, the following inequality holds for all $t \in \{0,1,2,\ldots,T-1\}$:
	\begin{equation}
		\cU^{t+1} \leq (1-\beta_t)\cU^t.
	\end{equation}
\end{lemma}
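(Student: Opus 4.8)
The plan is to run the potential-function argument underlying the Extra Anchored Gradient method while additionally carrying along the composite subgradients. Introduce the shorthand $\bar u^t = (1-\beta_t)u^t + \beta_t u^0$ together with $G^t = a^t + b^t$, $G^{t+1} = a^{t+1} + b^{t+1}$, and $H^t = a^{t+1/2} + b^{t+1}$, where $a^{t+1/2} = A(u^{t+1/2})$; by construction $a^t = A(u^t)$, $b^t \in B(u^t)$ and $b^{t+1}\in B(u^{t+1})$. From \cref{eag:line:1} of \Cref{alg:eag}, and from \cref{eag:line:2} (which encodes the resolvent identity $\lambda b^{t+1} = \bar u^t - \lambda a^{t+1/2} - u^{t+1}$), I would record the basic identities $u^{t+1/2} = \bar u^t - \lambda G^t$, $u^{t+1} = \bar u^t - \lambda H^t$, and hence $u^{t+1}-u^{t+1/2} = \lambda(G^t - H^t)$, $u^{t+1} - u^0 = (1-\beta_t)(u^t - u^0) - \lambda H^t$, and $u^{t+1}-u^t = \beta_t(u^0 - u^t) - \lambda H^t$; note also $a^{t+1} - a^{t+1/2} = G^{t+1} - H^t$.

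First I would expand the potential difference. Substituting the identity for $u^{t+1}-u^0$ into $\<G^{t+1}, u^{t+1}-u^0>$ and using $\cU^t = \<G^t, u^t - u^0> + \tfrac{\lambda}{2\beta_t}\sqn{G^t}$ gives
\begin{equation*}
\begin{split}
\cU^{t+1} - (1-\beta_t)\cU^t &= (1-\beta_t)\<G^{t+1} - G^t, u^t - u^0> - \lambda\<G^{t+1}, H^t>\\
&\quad + \tfrac{\lambda}{2\beta_{t+1}}\sqn{G^{t+1}} - \tfrac{(1-\beta_t)\lambda}{2\beta_t}\sqn{G^t}.
\end{split}
\end{equation*}
The anchor term is then removed via monotonicity: adding the monotonicity inequalities of $A$ and of $B$ at the pair $(u^{t+1},u^t)$ gives $\<G^{t+1}-G^t, u^{t+1}-u^t>\ge0$, and combining this with $u^{t+1}-u^t = \beta_t(u^0-u^t)-\lambda H^t$ bounds $(1-\beta_t)\<G^{t+1}-G^t,u^t-u^0>$ from above by $-\tfrac{(1-\beta_t)\lambda}{\beta_t}\<G^{t+1}-G^t, H^t>$. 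What remains is a quadratic form in $G^t$, $G^{t+1}$, $H^t$, which I would control using (a) monotonicity of $A$ at $(u^{t+1},u^{t+1/2})$, i.e.\ $\<G^{t+1}-H^t, G^t - H^t>\ge 0$ (via $a^{t+1}-a^{t+1/2}=G^{t+1}-H^t$ and $u^{t+1}-u^{t+1/2}=\lambda(G^t-H^t)$), possibly also monotonicity at $(u^{t+1/2},u^t)$, and (b) the Lipschitz estimate $\sqn{G^{t+1}-H^t} = \sqn{a^{t+1}-a^{t+1/2}} \le M^2\lambda^2\sqn{G^t-H^t}$, together with Young's inequality to split the cross terms.

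The final step is the bookkeeping: substitute $\beta_t = 2/(t+3)$, so that $\tfrac{1-\beta_t}{\beta_t} = \tfrac{t+1}{2}$ and $\tfrac{1}{2\beta_{t+1}} = \tfrac{t+4}{4}$, and $\lambda = 1/(\sqrt5 M)$, i.e.\ $M^2\lambda^2 = \tfrac15$, and verify that, with the right choice of Young weights, all collected coefficients of $\sqn{G^t}$, $\sqn{G^{t+1}}$, $\sqn{H^t}$ and of the cross terms assemble into a non-positive combination, giving $\cU^{t+1}\le(1-\beta_t)\cU^t$. I expect this matching of constants — choosing exactly which monotonicity pairs to invoke, with which multipliers and Young weights, so that everything cancels precisely at $\lambda = 1/(\sqrt5 M)$ — to be the main obstacle; it is the same delicate accounting as in the $B\equiv\{0\}$ analysis of \citet{yoon2021accelerated}. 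The only genuinely new ingredient is that $G^t$, $G^{t+1}$, $H^t$ now carry the subgradient parts $b^t$, $b^{t+1}$; but since \Cref{alg:eag} maintains an explicit $b^t\in B(u^t)$ at every iterate, plain monotonicity of $B$ (\Cref{ass:B}) slots into exactly the places where monotonicity of $A$ is used, so no modification of the constants is needed.
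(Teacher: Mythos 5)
Your plan follows the same route as the paper: expand $\cU^{t+1}-(1-\beta_t)\cU^t$ via the update identities, kill the anchor term using monotonicity of $A$ and $B$ at the pair $(u^{t+1},u^t)$, and then control the remaining quadratic form in $G^t,G^{t+1},H^t$ using the parallelogram identity, the bound $\sqn{H^t}\ge\tfrac12\sqn{G^{t+1}}-\sqn{G^{t+1}-H^t}$, Lipschitzness of $A$ (which gives $\sqn{G^{t+1}-H^t}\le M^2\lambda^2\sqn{G^t-H^t}$ since $u^{t+1}-u^{t+1/2}=\lambda(G^t-H^t)$), and the telescoping identity $\tfrac1{\beta_{t+1}}=\tfrac1{\beta_t}+\tfrac12$. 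The bookkeeping does close: after these substitutions everything reduces to the factor $\tfrac12\bigl((\tau+1)M^2\lambda^2-(\tau-1)\bigr)\sqn{G^t-H^t}$ with $\tau=1/\beta_t=(t+3)/2$, which is $\le 0$ exactly when $M^2\lambda^2\le\tfrac{t+1}{t+5}$, and the worst case $t=0$ pins down $\lambda=1/(\sqrt5 M)$; the extra monotonicity inequality you mention at $(u^{t+1},u^{t+1/2})$ is valid but turns out not to be needed, and the only place where monotonicity of $B$ enters (beyond $A$) is indeed the pair $(u^{t+1},u^t)$, because $H^t$ and $G^{t+1}$ share the same $b^{t+1}$.
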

\begin{proof}
	We start with the monotonicity property of operators $A(u)$ and $B(u)$:
	\begin{equation}\label{eq:5}
		\<u^{t+1} - u^t, (a^{t+1} + b^{t+1}) - (a^t + b^t)> \geq 0,
	\end{equation}
	where $t \in \{0,1,2,\ldots, T-1\}$. From \cref{eag:line:2} of \Cref{alg:eag} we can conclude that
	\begin{equation}\label{eq:6}
		u^{t+1} = u^t + \beta_t(u^0 - u^t) - \lambda (a^{t+1/2} + b^{t+1}),
	\end{equation}
	where $a^{t+1/2} = A(u^{t+1/2})$. From this we also conclude that 
	\begin{equation}\label{eq:7}
		u^{t+1} = u^t + (1-\beta_t)^{-1}\left[\beta_t(u^0 - u^{t+1})- \lambda (a^{t+1/2} + b^{t+1})\right].
	\end{equation}
	Plugging \eqref{eq:6} and \eqref{eq:7} into \eqref{eq:5} gives
	\begin{align*}
		0 &\leq
		(1-\beta_t)^{-1}\<a^{t+1} + b^{t+1},\beta_t (u^0 - u^{t+1}) - \lambda(a^{t+1/2} + b^{t+1})>
		\\&
		-\<a^t+b^t,\beta_t(u^0 - u^t)-\lambda (a^{t+1/2} + b^{t+1})>
		\\&=
		-(1-\beta_t)^{-1}\left[\beta_t \<a^{t+1}+b^{t+1},u^{t+1} - u^0> + \lambda \<a^{t+1} + b^{t+1}, a^{t+1/2} + b^{t+1}>\right]
		\\&
		+\beta_t\<a^t+b^t,u^t - u^0>
		+\lambda\<a^t+b^t,a^{t+1/2} + b^{t+1}>.
	\end{align*}
	Using the parallelogram rule we get
	\begin{align*}
		0 &\leq
		-(1-\beta_t)^{-1}\beta_t \<a^{t+1}+b^{t+1},u^{t+1} - u^0>
		\\&
		-\frac{(1-\beta_t)^{-1}\lambda}{2}\left[\sqn{a^{t+1} + b^{t+1}} + \sqn{a^{t+1/2} + b^{t+1}} - \sqn{a^{t+1} - a^{t+1/2}}\right]
		\\&
		+\beta_t\<a^t+b^t,u^t - u^0>
		\\&
		+\frac{\lambda}{2}\left[\sqn{a^t + b^t} + \sqn{a^{t+1/2} + b^{t+1}} - \sqn{a^t + b^t - (a^{t+1/2} + b^{t+1})}\right]
		\\&=
		\beta_t\<a^t+b^t,u^t - u^0> + \frac{\lambda}{2}\sqn{a^t + b^t}
		\\&
		-(1-\beta_t)^{-1}\left[\beta_t \<a^{t+1}+b^{t+1},u^{t+1} - u^0> + \frac{\lambda}{2}\sqn{a^{t+1} + b^{t+1}}\right]
		\\&
		+\frac{\lambda(1 - (1-\beta_t)^{-1})}{2}\sqn{a^{t+1/2} + b^{t+1}} + \frac{\lambda(1-\beta_t)^{-1}}{2}\sqn{a^{t+1} - a^{t+1/2}}
		\\&
		-\frac{\lambda}{2}\sqn{a^t + b^t - (a^{t+1/2} + b^{t+1})}
		\\&=
		\beta_t\<a^t+b^t,u^t - u^0> + \frac{\lambda}{2}\sqn{a^t + b^t}
		\\&
		-(1-\beta_t)^{-1}\left[\beta_t \<a^{t+1}+b^{t+1},u^{t+1} - u^0> + \frac{\lambda}{2}\sqn{a^{t+1} + b^{t+1}}\right]
		\\&
		-\frac{\lambda\beta_t(1-\beta_t)^{-1}}{2}\sqn{a^{t+1/2} + b^{t+1}} + \frac{\lambda(1-\beta_t)^{-1}}{2}\sqn{a^{t+1} - a^{t+1/2}}
		\\&
		-\frac{\lambda}{2}\sqn{a^t + b^t - (a^{t+1/2} + b^{t+1})}.
	\end{align*}
	Now, we divide both sides of the inequality by $\beta_t$ and get
	\begin{align*}
		0 &\leq
		\<a^t+b^t,u^t - u^0> + \frac{\lambda}{2\beta_t}\sqn{a^t + b^t}
		\\&
		-(1-\beta_t)^{-1}\left[\<a^{t+1}+b^{t+1},u^{t+1} - u^0> + \frac{\lambda}{2\beta_t}\sqn{a^{t+1} + b^{t+1}}\right]
		\\&
		-\frac{\lambda}{2(1-\beta_t)}\sqn{a^{t+1/2} + b^{t+1}} + \frac{\lambda}{2\beta_t(1-\beta_t)}\sqn{a^{t+1} - a^{t+1/2}}
		\\&
		-\frac{\lambda}{2\beta_t}\sqn{a^t + b^t - (a^{t+1/2} + b^{t+1})}.
	\end{align*}
	using the inequality $\sqn{a^{t+1/2} + b^{t+1}} \geq \frac{1}{2}\sqn{a^{t+1} + b^{t+1}} - \sqn{a^{t+1} - a^{t+1/2}}$ we get
	\begin{align*}
		0 &\leq
		\<a^t+b^t,u^t - u^0> + \frac{\lambda}{2\beta_t}\sqn{a^t + b^t}
		\\&
		-(1-\beta_t)^{-1}\left[\<a^{t+1}+b^{t+1},u^{t+1} - u^0> + \frac{\lambda}{2\beta_t}\sqn{a^{t+1} + b^{t+1}}\right]
		\\&
		-\frac{\lambda}{4(1-\beta_t)}\sqn{a^{t+1} + b^{t+1}}
		+\frac{\lambda}{2(1-\beta_t)}\sqn{a^{t+1/2} + a^{t+1}} 
		+ \frac{\lambda}{2\beta_t(1-\beta_t)}\sqn{a^{t+1} - a^{t+1/2}}
		\\&
		-\frac{\lambda}{2\beta_t}\sqn{a^t + b^t - (a^{t+1/2} + b^{t+1})}
		\\&=
		\<a^t+b^t,u^t - u^0> + \frac{\lambda}{2\beta_t}\sqn{a^t + b^t}
		\\&
		-(1-\beta_t)^{-1}\left[\<a^{t+1}+b^{t+1},u^{t+1} - u^0> + \frac{\lambda}{2}\left(\frac{1}{\beta_t} + \frac{1}{2}\right)\sqn{a^{t+1} + b^{t+1}}\right]
		\\&
		+\frac{\lambda}{2\beta_t}\left(
		\frac{1 + \beta_t}{1-\beta_t}\sqn{a^{t+1/2} + a^{t+1}} 
		-\sqn{a^t + b^t - (a^{t+1/2} + b^{t+1})}
		\right).
	\end{align*}
	From the definition of $\beta_t$ it follows that $\frac{1}{\beta_{t+1}} = \frac{1}{\beta_t} + \frac{1}{2}$ and $\frac{1+\beta_t}{1 - \beta_t} \leq 5$. Hence,
	\begin{align*}
		0 &\leq
		\<a^t+b^t,u^t - u^0> + \frac{\lambda}{2\beta_t}\sqn{a^t + b^t}
		\\&
		-(1-\beta_t)^{-1}\left[\<a^{t+1}+b^{t+1},u^{t+1} - u^0> + \frac{\lambda}{2\beta_{t+1}}\sqn{a^{t+1} + b^{t+1}}\right]
		\\&
		+\frac{\lambda}{2\beta_t}\left(
		5\sqn{a^{t+1/2} + a^{t+1}} 
		-\sqn{a^t + b^t - (a^{t+1/2} + b^{t+1})}
		\right).
	\end{align*}
	Using the $M$-Lipschitzness of $A(u)$ we get
	\begin{align*}
		0 &\leq
		\<a^t+b^t,u^t - u^0> + \frac{\lambda}{2\beta_t}\sqn{a^t + b^t}
		\\&
		-(1-\beta_t)^{-1}\left[\<a^{t+1}+b^{t+1},u^{t+1} - u^0> + \frac{\lambda}{2\beta_{t+1}}\sqn{a^{t+1} + b^{t+1}}\right]
		\\&
		+\frac{\lambda}{2\beta_t}\left(
		5M^2\sqn{u^{t+1/2} + u^{t+1}} 
		-\sqn{a^t + b^t - (a^{t+1/2} + b^{t+1})}
		\right).
	\end{align*}
	From \cref{eag:line:1,eag:line:2} of \Cref{alg:eag} it follows that 
	\begin{equation*}
		u^{t+1/2} - u^t = \lambda(a^t + b^t - (a^{t+1/2} + b^{t+1}))
	\end{equation*}
	Hence,
	\begin{align*}
		0 &\leq
		\<a^t+b^t,u^t - u^0> + \frac{\lambda}{2\beta_t}\sqn{a^t + b^t}
		\\&
		-(1-\beta_t)^{-1}\left[\<a^{t+1}+b^{t+1},u^{t+1} - u^0> + \frac{\lambda}{2\beta_{t+1}}\sqn{a^{t+1} + b^{t+1}}\right]
		\\&
		+\frac{\lambda}{2\beta_t}\left(
		5M^2\lambda^2-1
		\right)\sqn{a^t + b^t - (a^{t+1/2} + b^{t+1})}
	\end{align*}
	Using the definition of $\lambda$ we get
	\begin{align*}
		0 &\leq
		\<a^t+b^t,u^t - u^0> + \frac{\lambda}{2\beta_t}\sqn{a^t + b^t}
		\\&
		-(1-\beta_t)^{-1}\left[\<a^{t+1}+b^{t+1},u^{t+1} - u^0> + \frac{\lambda}{2\beta_{t+1}}\sqn{a^{t+1} + b^{t+1}}\right].
	\end{align*}
	Rearranging and multiplying both sides of the inequality by $(1-\beta_t)$ concludes the proof.
	
\end{proof}
Now, we are ready to prove \Cref{eag:thm}.
\begin{proof}[\bf Proof of \Cref{eag:thm}]
	Unrolling the recurrence from \Cref{eag:lem:3} we get
	\begin{equation*}
		\cU^T \leq \prod_{t=0}^{T-1}\cU^0.
	\end{equation*}
	Using \Cref{eag:lem:2} we get
	\begin{equation*}
		\cU^T \leq\frac{2}{(T+1)(T+2)}\cU^0.
	\end{equation*}
	Using the definition of $\cU^t$ we get
	\begin{equation*}
		\frac{\lambda}{\beta_0(T+1)(T+2)} \sqn{a^0 + b^0}\geq \<a^T+b^T,u^T - u^0> + \frac{\lambda}{2\beta_T}\sqn{a^T + b^T}.
	\end{equation*}
	Using the definition of $\beta_t$ we get
	\begin{equation*}
		\frac{3\lambda}{2(T+1)(T+2)} \sqn{a^0 + b^0}\geq \<a^T+b^T,u^T - u^0> + \frac{\lambda(T+3)}{4}\sqn{a^T + b^T}.
	\end{equation*}
	Vector $u^*$ is the solution to problem~\eqref{eq:inclusion}. Hence, there exists $b^* \in B(u^*), a^* = A(u^*)$ such that $a^* + b^* = 0$. From the monotonicity assumption it follows that
	\begin{equation*}
		\<a^T+b^T,u^T - u^*> = 	\<a^T+b^T - (a^* + b^*),u^T - u^*> \geq 0.
	\end{equation*}
	Hence,
	\begin{equation*}
		\frac{3\lambda}{2(T+1)(T+2)} \sqn{a^0 + b^0}\geq \<a^T+b^T,u^* - u^0> + \frac{\lambda(T+3)}{4}\sqn{a^T + b^T}.
	\end{equation*}
	Using the Young's inequality we get
	\begin{align*}
		\frac{3\lambda}{2(T+1)(T+2)} \sqn{a^0 + b^0}&\geq-\frac{\lambda(T+3)}{8}\sqn{a^T+b^T}  - \frac{2}{\lambda(T+3)}\sqn{u^0 - u^*} \\&+ \frac{\lambda(T+3)}{4}\sqn{a^T + b^T}
		\\&=
		\frac{\lambda(T+3)}{8}\sqn{a^T+b^T}  - \frac{2}{\lambda(T+3)}\sqn{u^0 - u^*}.
	\end{align*}
	After rearranging we get
	\begin{align*}
		\frac{\lambda(T+3)}{8}\sqn{a^T+b^T} \leq
		\frac{2}{\lambda(T+3)}\sqn{u^0 - u^*} + \frac{3\lambda}{2(T+1)(T+2)} \sqn{a^0 + b^0}.
	\end{align*}
	Multiplying both sides of the inequality by $\frac{8}{\lambda(T+3)}$ gives
	\begin{align*}
		\sqn{a^T+b^T} \leq
		\frac{8}{\lambda^2(T+3)^2}\sqn{u^0 - u^*} + \frac{12\lambda}{(T+1)(T+2)(T+3)} \sqn{a^0 + b^0}.
	\end{align*}
	Plugging the definition of $\lambda$ gives
	\begin{align*}
		\sqn{a^T+b^T} &\leq
		\frac{40M^2}{(T+3)^2}\sqn{u^0 - u^*} + \frac{12}{(T+1)(T+2)(T+3)} \sqn{a^0 + b^0}
		\\&\leq
		\frac{1}{(T+1)^2}\left(96 M^2 \sqn{u^0 - u^*} + 12\sqn{a^0 + b^0}\right)
	\end{align*}
	Using \Cref{eag:lem:1} we get
	\begin{align*}
		\sqn{a^T+b^T} &\leq
		\frac{288M^2}{(T+1)^2}\sqn{u^{-1} - u^*}.
	\end{align*}
\end{proof}

\newpage 

\section{Proof of \Cref{lem:M}}
Monotonicity of $A^k(u)$ can be verified trivially. Maximal monotonicity of $B(u)$ follows from the fact that it is equal to the subdifferential of a convex function $(\gamma_x^{-1/2} x, \gamma_y^{-1/2}y)\mapsto r(x) + g(y)$ and \citep[Theorem 12.17]{rockafellar2009variational}.

Now, we prove Lipschitzness  of operator $A^k(u)$. Denoting $u_1=(\gamma_x^{-1/2}x_1, \gamma_y^{-1/2}y_1)$ and $u_2=(\gamma_x^{-1/2}x_2, \gamma_y^{-1/2}y_2)$ gives 
\begin{align*}
	\sqn{A^k(u_1) - A^k(u_2)}
	&\leq
	\gamma_x\sqn{a_x^k(x_1,y_1)-a_x^k(x_2,y_2)}
	+\gamma_y\sqn{a_y^k(x_1,y_1) - a_y^k(x_2,y_2)}
	\\&\leq
	2\gamma_x\sqn{a_x^k(x_1,y_1)-a_x^k(x_1,y_2)}
	+2\gamma_x\sqn{a_x^k(x_1,y_2)-a_x^k(x_2,y_2)}
	\\&
	+2\gamma_y\sqn{a_y^k(x_1,y_1) - a_y^k(x_1,y_2)}
	+2\gamma_y\sqn{a_y^k(x_1,y_2) - a_y^k(x_2,y_2)}
\end{align*}
Using the definition of operators $a_x^k(x,y)$ and $a_y^k(x,y)$, the definition of function $\F(x,y)$ and  \Cref{ass:smoothness,ass:convexity,ass:concavity} we get
\begin{align*}
	\sqn{A^k(u_1) - A^k(u_2)}
	&\leq
	2\gamma_xL^2\sqn{y_1 - y_2}
	+2\gamma_xL^2\sqn{x_1 - x_2}
	\\&
	+2\gamma_y (L+\theta_y^{-1})^2\sqn{y_1 - y_2}
	+2\gamma_y L^2\sqn{x_1 - x_2}
	\\&=
	2(\gamma_x\gamma_yL^2 + \gamma_y^2 (L+\theta_y^{-1})^2)\gamma_y^{-1}\sqn{y_1 - y_2}
	\\&
	+2(\gamma_x^2 L^2 + \gamma_x\gamma_y L^2)\gamma_x^{-1}\sqn{x_1 - x_2}
	\\&\leq
	4\max\{\gamma_x^2L^2, \gamma_x\gamma_yL^2,\gamma_y^2 (L+\theta_y^{-1})^2\}\sqn{u_1 - u_2}
	\\&=
	4\max\{\gamma_x^2L^2,\gamma_y^2 (L+\theta_y^{-1})^2\}\sqn{u_1 - u_2}.
\end{align*}
\qed

\section{Proof of \Cref{lem:T}}
To prove this lemma it is sufficient to show that condition~\eqref{foam:while} is satisfied when $t = T$, where $T$ is given by \eqref{foam:T}. 

Let $(x^{k,*},y^{k,*})$ be the solution of the monotone inclusion problem
\begin{equation*}
	0 \in A^k((\gamma_x^{-1/2}x^{k,*}, \gamma_y^{-1/2}y^{k,*})) + B((\gamma_x^{-1/2}x^{k,*} \gamma_y^{-1/2}y^{k,*})).
\end{equation*}
Note, that this solution always exists. One can easily show that operator $A^k(u)$ is strongly monotone, which implies the following inequality:
\begin{align*}
	\<x^{k,t} - x^{k,*},a_x^k(x^{k,t}, y^{k,t}) + b_x^{k,t}> &+ \<y^{k,t} - y^{k,*},a_y^k(x^{k,t}, y^{k,t}) + b_y^{k,t}>\geq
	\\
	&\geq \frac{\mu_x}{2}\sqn{x^{k,t} - x^{k,*}} + \theta_y^{-1}\sqn{y^{k,t} - y^{k,*}}
	\\&\geq \gamma_x^{-1}\sqn{x^{k,t} - x^{k,*}} + \gamma_y^{-1}\sqn{y^{k,t} - y^{k,*}}.
\end{align*}
The latter inequality implies
\begin{align*}
	\gamma_x^{-1}\sqn{x^{k,t} - x^{k,*}} &+ \gamma_y^{-1}\sqn{y^{k,t} - y^{k,*}}\leq\\
	&\leq
	\gamma_x\sqn{a_x^k(x^{k,t}, y^{k,t}) + b_x^{k,t}}
	+\gamma_y\sqn{a_y^k(x^{k,t}, y^{k,t}) + b_y^{k,t}}.
\end{align*}
Further, we get
\begin{align*}
	\gamma_x^{-1}\sqn{x^{k,-1} - x^{k,*}} &+ \gamma_y^{-1}\sqn{y^{k,-1} - y^{k,*}}
	\\&\leq
	2\gamma_x^{-1}\sqn{x^{k,-1} - x^{k,t}} + 2\gamma_y^{-1}\sqn{y^{k,-1} - y^{k,t}}
	\\&
	+2\gamma_x^{-1}\sqn{x^{k,t} - x^{k,t}} + 2\gamma_y^{-1}\sqn{y^{k,t} - y^{k,t}}
	\\&\leq
	2\gamma_x^{-1}\sqn{x^{k,-1} - x^{k,t}} + 2\gamma_y^{-1}\sqn{y^{k,-1} - y^{k,t}}
	\\&
	+2\gamma_x\sqn{a_x^k(x^{k,t}, y^{k,t}) + b_x^{k,t}}
	+2\gamma_y\sqn{a_y^k(x^{k,t}, y^{k,t}) + b_y^{k,t}}.
\end{align*}

Using \Cref{eag:thm} and the definition~\eqref{eq:AB} of operators $A^k(u)$ and $B(u)$ we get the following inequality:
\begin{align*}
	\gamma_x\sqn{a_x^k(x^{k,t}, y^{k,t}) + b_x^{k,t}} &+ \gamma_y\sqn{a_y^k(x^{k,t}, y^{k,t}) + b_y^{k,t}} 
	\\& \leq
	\frac{288M^2}{(t+1)^2}\left(\gamma_x^{-1}\sqn{x^{k,-1} - x^{k,*}} + \gamma_y^{-1}\sqn{y^{k,-1} - y^{k,*}}\right)
	\\&\leq
	\frac{288M^2}{(t+1)^2}\left(2\gamma_x^{-1}\sqn{x^{k,-1} - x^{k,t}} + 2\gamma_y^{-1}\sqn{y^{k,-1} - y^{k,t}}\right)
	\\&
	+\frac{288M^2}{(t+1)^2}\left(2\gamma_x\sqn{a_x^k(x^{k,t}, y^{k,t}) + b_x^{k,t}}
	+2\gamma_y\sqn{a_y^k(x^{k,t}, y^{k,t}) + b_y^{k,t}}\right)
	\\&=
	\frac{576M^2}{(t+1)^2}\left(\gamma_x^{-1}\sqn{x^{k,-1} - x^{k,t}} + \gamma_y^{-1}\sqn{y^{k,-1} - y^{k,t}}\right)
	\\&
	+\frac{576M^2}{(t+1)^2}\left(\gamma_x\sqn{a_x^k(x^{k,t}, y^{k,t}) + b_x^{k,t}}
	+s\gamma_y\sqn{a_y^k(x^{k,t}, y^{k,t}) + b_y^{k,t}}\right)
\end{align*}
Now, we set $t = T$, where $T$ is defined by \eqref{foam:T}, and use the definition~\eqref{foam:M} of $M$. This implies.
\begin{align*}
	\gamma_x\sqn{a_x^k(x^{k,t}, y^{k,t}) + b_x^{k,t}} &+ \gamma_y\sqn{a_y^k(x^{k,t}, y^{k,t}) + b_y^{k,t}} 
	\\&\leq
	\frac{1}{2}\left(\gamma_x^{-1}\sqn{x^{k,-1} - x^{k,t}} + \gamma_y^{-1}\sqn{y^{k,-1} - y^{k,t}}\right)
	\\&
	+\frac{1}{2}\left(\gamma_x\sqn{a_x^k(x^{k,t}, y^{k,t}) + b_x^{k,t}}
	+\gamma_y\sqn{a_y^k(x^{k,t}, y^{k,t}) + b_y^{k,t}}\right).
\end{align*}
After rearranging, we obtain condition~\eqref{foam:while} with $t = T$.

Now, one can observe that there are two possibilities: the while-loop of \Cref{alg:foam} stops when $t=T$, otherwise it stops earlier. This concludes the proof.\qed

\section{Proof of \Cref{foam:thm} }
According to \Cref{pp:thm}, the following number of outer iterations of \Cref{alg:foam} are required to find an $\epsilon$-accurate solution:
\begin{equation}
	K = \cO\left(\max\left\{\frac{1}{\alpha}, \frac{\alpha}{\theta_y\mu_y}\right\}\log\frac{1}{\epsilon}\right).
\end{equation}
Using the definition of $\alpha$ and $\theta_y$ we get
\begin{equation}
	K = \cO\left(\max\left\{1, \sqrt{\frac{\mu_x}{\mu_y}}\right\}\log\frac{1}{\epsilon}\right).
\end{equation}
According to \Cref{lem:T} at most $T$ inner iterations are performed by \Cref{alg:foam} at each outer iteration, where $T$ is given as
\begin{equation}
	T = \cO \left(\max \left\{\frac{L}{\mu_x}, \theta_y L \right\} \right).
\end{equation}
Using the definition of $\theta_y$ we get
\begin{equation}
	T = \cO \left(\frac{L}{\mu_x} \right).
\end{equation}
Now, we get the total number of iterations:
\begin{equation}
	K \times T = \cO\left(\max\left\{1, \sqrt{\frac{\mu_x}{\mu_y}}\right\}\log\frac{1}{\epsilon}\right) \times \cO \left(\frac{L}{\mu_x} \right)
	=\cO\left(\max\left\{\frac{L}{\mu_x}, \frac{L}{\sqrt{\mu_x\mu_y}}\right\}\log \frac{1}{\epsilon}\right).
\end{equation}
It remains to observe that \Cref{alg:foam} performs $\cO(1)$ gradient evaluations per iteration.
\qed
\end{document}